\newcommand{\Q}{{\mathbb Q}}
\newcommand{\Z}{{\mathbb Z}}
\newcommand{\F}{{\mathbb F}}
\newcommand{\Kts}{K^{\times\,2}}
\newcommand{\Lts}{L^{\times\,2}}
\newcommand{\Fts}{F^{\times\,2}}
\newcommand{\fa}{{\mathfrak a}}
\newcommand{\fb}{{\mathfrak b}}
\newcommand{\fc}{{\mathfrak c}}
\newcommand{\cH}{{\mathcal H}}
\newcommand{\fl}{{\mathfrak l}}
\newcommand{\fm}{{\mathfrak m}}
\newcommand{\fp}{{\mathfrak p}}
\newcommand{\cO}{{\mathcal O}}
\newcommand{\Hom}{\mbox{\rm Hom}\,}
\newcommand{\Gal}{\mbox{\rm Gal}\,}
\newcommand{\Cl}{\operatorname{Cl}}
\newcommand{\Clp}{\widetilde{\Cl}}
\newcommand{\disc}{\mbox{\rm disc}\,}
\newcommand{\im}{\operatorname{im}}
\newcommand{\Sel}{\operatorname{Sel}}
\newcommand{\eps}{\varepsilon}
\newcommand{\hra}{\hookrightarrow}
\newcommand{\lra}{\longrightarrow}
\newcommand{\Lra}{\Longrightarrow}
\newcommand{\too}{\longmapsto}
\newcommand{\la}{\langle}
\newcommand{\ra}{\rangle}
\def\rsp{\raisebox{0em}[2.6ex][1ex]{\rule{0em}{2ex}}}
\newcounter{lemmacount}[section]
\newtheorem{theorem}[lemmacount]{Theorem}
\newtheorem{prop}[lemmacount]{Proposition}
\newtheorem{lem}[lemmacount]{Lemma}
\title{Selmer Groups and Quadratic Reciprocity}
\author{Franz Lemmermeyer}
\address{M\"orikeweg 1, 73489 Jagstzell}
\email{hb3@ix.urz.uni-heidelberg.de}
\begin{document}

\begin{abstract}
In this article we study the $2$-Selmer groups of number fields $F$
as well as some related groups, and present connections to the 
quadratic reciprocity law in $F$.
\end{abstract}

\maketitle

Let $F$ be a number field; elements in $F^\times$ that are ideal 
squares were called singular numbers in the classical literature.
They were studied in connection with explicit reciprocity laws,
the construction of class fields, or the solution of embedding
problems by mathematicians like Kummer, Hilbert, Furtw\"angler, 
Hecke, Takagi, Shafarevich and many others. Recently, the groups 
of singular numbers in $F$ were christened Selmer groups by H.~Cohen 
\cite{Coh} because of an analogy with the Selmer groups in the 
theory of elliptic curves (look at the exact sequence (\ref{ESel})
and recall that, under the analogy between number fields and
elliptic curves, units correspond to rational points, and
class groups to Tate-Shafarevich groups).  

In this article we will present the theory of $2$-Selmer groups 
in modern language, and give direct proofs based on class field
theory. Most of the results given here can be found in \S\S\ 61ff of 
Hecke's book \cite{Hec}; they had been obtained by Hilbert and
Furtw\"angler in the roundabout way typical for early class field 
theory, and were used for proving explicit reciprocity laws. Hecke, 
on the other hand, first proved (a large part of) the quadratic 
reciprocity law in number fields using his generalized Gauss sums
(see \cite{Berg} and \cite{LemRL2}), and then derived the existence 
of quadratic class fields (which essentially is just the calculation 
of the order of a certain Selmer group) from the reciprocity law. 

In Takagi's class field theory, Selmer groups were moved to the
back bench and only resurfaced in his proof of the reciprocity 
law. Once Artin had found his general reciprocity law, Selmer
groups were history, and it seems that there is no coherent 
account of their theory based on modern class field theory. 

Hecke's book \cite{Hec} is hailed as a classic, and it deserves
the praise. Its main claim to fame should actually have been 
Chapter VIII on the quadratic reciprocity law in number fields,
where he uses Gauss sums to prove the reciprocity law, then 
derives the existence of $2$-class fields, and finally proves
his famous theorem that the ideal class of the discriminant of
an extension is always a square. Unfortunately, this chapter is 
not exactly bedtime reading, so in addition to presenting Hecke's
results in a modern language I will also give exact references
to the corresponding theorems in Hecke's book \cite{Hec} in the 
hope of making this chapter more accessible.

The actual reason for writing this article, however, was that
the results on Selmer groups presented here will be needed for 
computing the separant class group of $F$, a new invariant that 
will be discussed thoroughly in \cite{LSep}, and for proving
a generalization of Scholz's reciprocity law to arbitrary number
fields in \cite{LSch}.

\section{Notation}
Let $F$ be a number field. The following notation will be used
throughout this article:
\begin{itemize}
\item $n$ is the degree $(F:\Q)$ of $F$. By $r$ and $s$ we denote the
      number of real and complex places of $F$; in particular,
      we have $n = r+2s$;
\item $F^\times_+$ is the subgroup of all totally positive elements
      in $F^\times = F \setminus \{0\}$;
\item for abelian groups $A$, $\dim A/A^2$ denotes the dimension of
      $A/A^2$ as a vector space over $\F_2$; note that 
      $(A:A^2) = 2^{\dim A/A^2}$;  
\item $E$ is the unit group of $F$, and $E^+$ its subgroup of
      totally positive units; observe that $\dim E/E^2 = r+s$;
\item $\Cl(F)$ and $\Cl^+(F)$ denote the class groups of $F$ in
      the usual and in the strict sense;
\item $\rho = \dim \Cl(F)/\Cl(F)^2$ and $\rho^+ = \dim \Cl^+(F)/\Cl^+(F)^2$
      denote the $2$-ranks of the class groups in the usual and in 
      the strict sense;
\item $\Cl_F\{4\}$ is the ray class group modulo $4$ in $F$, i.e.,
      the quotient of the group of ideals coprime to $(2)$ by the
      subgroup of principal ideals $(\alpha)$ with $\alpha \equiv 1 \bmod 4$.
      Similarly, $\Cl_F^+\{4\}$ is the ray class group modulo $4\infty$ in $F$.
\end{itemize}

\section{The Selmer Group}
\subsection{Definition of the Selmer Groups}
The $2$-Selmer group $\Sel(F)$ of a number field $F$ is defined as
$$ \Sel(F) = \{\alpha \in F^\times: (\alpha) = \fa^2\}/\Fts. $$

The elements $\omega \in F^\times$ with $\omega\Fts \in \Sel(F)$
are called singular in the classical literature (see e.g. Hecke 
\cite[\S\ 61, art. 4]{Hec}); Cohen \cite{Coh} calls them virtual
units. In fact we will see that if $F$ has odd class number, then  
$\Sel(F) \simeq E/E^2$.

The following lemma will allow us to define homomorphisms from
$\Sel(F)$ into groups of residue classes:

\begin{lem}\label{L4}
Let $\fm$ be an integral ideal in a number field $F$. Then every 
element in $\Sel(F)$ can be represented by an element coprime to $\fm$.
\end{lem}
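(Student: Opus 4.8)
The plan is to take an arbitrary element $\omega\Fts\in\Sel(F)$, with $(\omega)=\fa^2$, and adjust $\omega$ by a square so that the result is coprime to $\fm$. First I would note that the class of $\omega$ modulo $\Fts$ depends only on $\omega$ up to multiplication by squares of nonzero elements, so it suffices to exhibit $\xi\in F^\times$ with $\omega\xi^2$ coprime to $\fm$. Write $\fm=\prod_{\fp}\fp^{e_\fp}$ over the (finitely many) primes $\fp\mid\fm$, and let $v_\fp(\omega)$ denote the $\fp$-adic valuation of $\omega$. Since $(\omega)=\fa^2$, every $v_\fp(\omega)$ is even, say $v_\fp(\omega)=2m_\fp$. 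The goal is to choose $\xi$ with $v_\fp(\xi)=-m_\fp$ for each $\fp\mid\fm$, so that $v_\fp(\omega\xi^2)=0$ and $\omega\xi^2$ is a $\fp$-unit at every $\fp\mid\fm$; that is exactly the assertion that $\omega\xi^2$ is coprime to $\fm$.

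The key step is therefore to produce such a $\xi$. Consider the ideal $\fb=\prod_{\fp\mid\fm}\fp^{m_\fp}$ (allowing negative exponents, i.e.\ $\fb$ is a fractional ideal). Its ideal class lies in $\Cl(F)$, which is finite; let $h$ be the class number, so $\fb^h=(\gamma)$ is principal. I do not directly get $\fb$ principal, so instead I argue by the finiteness of the class group applied to $\fb$ itself: there exists a fractional ideal $\fc$ in the inverse class of $\fb$ with $\fc$ coprime to $\fm$ (every ideal class contains representatives coprime to any fixed modulus), and then $\fb\fc=(\xi)$ for some $\xi\in F^\times$. By construction $v_\fp(\xi)=v_\fp(\fb)=m_\fp$ for $\fp\mid\fm$ since $\fc$ contributes nothing there. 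Replacing $\xi$ by $\xi^{-1}$ gives $v_\fp(\xi)=-m_\fp$ as desired. Then $\omega\xi^2$ has valuation $0$ at each $\fp\mid\fm$, hence is coprime to $\fm$, and $\omega\xi^2\Fts=\omega\Fts$ in $\Sel(F)$.

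The main obstacle is the step asserting that each ideal class contains a representative coprime to $\fm$; this is the standard fact that, given a fractional ideal $\fb$ and an integral ideal $\fm$, one can find $\fc$ in the class of $\fb$ with $\fc+\fm=\cO_F$, which follows from weak approximation (or the Chinese remainder theorem applied at the primes dividing $\fm$). Everything else is bookkeeping with valuations. I would also remark, for completeness, that the same argument shows one can simultaneously make the representative coprime to $\fm$ and control its sign at the real places if needed, though that refinement is not asked for here.
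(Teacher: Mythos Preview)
Your proof is correct and uses essentially the same idea as the paper: the key input in both is the standard fact that every ideal class contains a representative coprime to $\fm$. The paper applies this fact directly to the ideal $\fa$ with $(\alpha)=\fa^2$, choosing $\fb=\gamma\fa$ coprime to $\fm$ and setting $\beta=\alpha\gamma^2$, which is a slightly more streamlined version of your valuation-by-valuation construction.
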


\begin{proof}
Let $\alpha \Fts \in \Sel(F)$ and write $(\alpha) = \fa^2$. Now 
find an ideal $\fb$ coprime to $\fm$ in the ideal class $[\fa]$; 
then $\gamma \fa = \fb$, hence $\beta = \alpha \gamma^2$ satisfies 
$(\beta) = \fb^2$.
\end{proof}

Let us now introduce the following groups:
$$ M_4 = (\cO_F/4\cO_F)^\times, \quad 
   M^+ = F^\times/F^\times_+, \quad 
   M_4^+ = M_4 \oplus M^+. $$
Note that $F^\times/F^\times_+ \simeq (\Z/2\Z)^r$ via the signature map.
Moreover, the isomorpism $M_4/M_4^2 \simeq (\Z/2\Z)^n$ is induced by the 
map sending $\alpha \bmod (2)$ to $1+2\alpha \bmod (4)$. This implies
that $M_4^+/(M_4^+)^2 \simeq (\Z/2\Z)^{2r+2s}$.

By Lemma \ref{L4} we can define a map $\phi: \Sel(F) \lra M_4/M_4^2$ by
sending $\alpha F^{\times\,2}$, where $\alpha$ is chosen coprime
to $2$, to the class of $\alpha \bmod 4$. Now we define certain 
subgroups of $\Sel(F)$ via the exact sequences
$$ \begin{CD}
   1 @>>> \Sel^+(F) @>>> \Sel(F) @>>> M^+/(M^+)^2 \\
   1 @>>> \Sel_4(F) @>>> \Sel(F) @>>> M_4/M_4^2  \\
   1 @>>> \Sel_4^+(F) @>>> \Sel(F) @>>> M_4^+/(M_4^+)^2.
  \end{CD} $$

\subsection{Computation of the Selmer Ranks}
Now let $\fm$ be an arbitrary modulus, i.e. a formal product
of an integral ideal and some real infinite primes. There is
a natural projection $\Cl_F\{\fm\} \lra \Cl(F)$, and this 
induces an epimorphism 
$\nu: \Cl_F\{\fm\}/\Cl_F\{\fm\}^2 \lra \Cl(F)/\Cl(F)^2$.
The kernel of $\nu$ consists of classes $[(\alpha)]_\fm$
with $\alpha \in F^\times$ coprime to $\fm$. In fact, if
we denote the of coprime residue classes modulo $\fm$ by
$M_\fm$, and define a homomorphism 
$\mu: M_\fm/M_\fm^2 \lra \Cl_F\{\fm\}/\Cl_F\{\fm\}^2$
by sending the coset of the residue class $\alpha + \fm$ 
to the coset of the ideal class $[(\alpha)]_\fm \in \Cl_F\{\fm\}$,
then it is easily checked that $\mu$ is well defined, and that
we have $\ker \nu = \im \mu$.

The kernel of $\mu_\fm$ consists of all residue classes
$\alpha M_\fm^2$ for which $(\alpha)$ is equivalent to the
principal class modulo squares. If $(\alpha) = \gamma \fb^2$, then 
$\fb^2 = (\beta)$ is principal (hence $\beta \Fts \in \Sel(F)$)
and can be chosen in such a way that $\alpha \equiv \beta \bmod 4$; 
conversely, if $\alpha$ is congruent modulo $4$ to some element in 
the Selmer group, then $\alpha M_\fm^2 \in \ker \fm$. 
This shows that $\ker \fm$ equals the image of the map
$\Sel(F) \lra M_\fm/M_\fm^2$. 

By taking $\fm = \infty$, $4$, and $4\infty$ we thus get the 
exact sequences
$$ \begin{array}{cccccccccccc}
  1 & \lra & \Sel^+   & \lra & \Sel & \lra & M_+/2
                      & \lra & \Cl^+/2 & \lra & \Cl/2 & \lra 1, \\
  1 & \lra & \Sel_4   & \lra & \Sel & \lra & M_4/2
                      & \lra & \Cl\{4\}/2 & \lra & \Cl/2 & \lra 1, \\
  1 & \lra & \Sel_4^+ & \lra & \Sel & \lra & M_4^+/2
                      & \lra & \Cl^+\{4\}/2 & \lra & \Cl/2 & \lra 1,
  \end{array} $$
where $A/2$ denotes the factor group $A/A^2$.

Let us now determine the order of the Selmer groups.
The map sending $\alpha \in \Sel(F)$ to the ideal class 
$[\fa] \in \Cl(F)$ is a well defined homomorphism which 
induces an exact sequence
$$ \begin{CD}\label{ESel}
    1 @>>> E/E^2    @>>> \Sel(F)   @>>> \Cl(F)[2] @>>> 1.
   \end{CD} $$
Since $E/E^2 \simeq (\Z/2\Z)^{r+s}$, this implies 
$\Sel(F) \simeq (\Z/2\Z)^{\rho+r+s}$.

The group $\Sel_4(F)$ consists of all $\alpha \in F^\times$
modulo squares such that $F(\sqrt{\alpha}\,)/F$ is unramified
outside infinity; this shows that $\dim \Sel_4(F) = \rho^+$.
Similarly, the elements of $\Sel_4^+(F)$ correspond to quadratic
extensions of $F$ that are unramified everywhere, hence
$\dim \Sel_4^+(F) = \rho$. Finally, the first of the three 
exact sequences above shows that $\dim \Sel^+(F) = \rho^+ + s$.
We have proved

\begin{theorem}\label{TSel}
Let $F$ be a number field and let $\rho$ and $\rho^+$ denote the 
$2$-ranks of the class groups in the usual and in the strict sense.
The dimensions of the Selmer groups as vector spaces over $\F_2$ 
are given by the following table:
$$ \begin{array}{l|cccc}
\rsp \qquad A &    \Sel(F)   &  \Sel^+(F) & \Sel_4(F) & \Sel_4^+(F) \\ \hline
\rsp  \dim  A & \rho + r + s & \rho^+ + s & \rho^+    & \rho 
   \end{array} $$
Similarly, the dimensions of the associated ray class groups are
$$ \begin{array}{l|cccc}
\rsp \qquad A & \Cl(F) & \Cl^+(F) & \Cl_F\{4\}  & \Cl_F^+\{4\} \\ \hline
\rsp \dim A/A^2 &  \rho  & \rho^+   & \rho^+ + s  & \rho + r + s
   \end{array} $$
\end{theorem}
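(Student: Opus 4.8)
The plan is to read off all eight dimensions from the five-term exact sequences already assembled above, so that only three genuinely arithmetic inputs are needed: the sequence (\ref{ESel}), the Kummer-theoretic/class-field-theoretic descriptions of $\Sel_4(F)$ and $\Sel_4^+(F)$, and the $\F_2$-dimensions of the coefficient groups $M_+$, $M_4$, $M_4^+$ recorded before the theorem.

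First I would obtain $\dim \Sel(F) = \rho+r+s$ directly from (\ref{ESel}): since $E/E^2 \simeq (\Z/2\Z)^{r+s}$, and since for any finite abelian group $A$ one has $\dim A[2] = \dim A/A^2$ (so $\dim \Cl(F)[2] = \rho$), additivity of $\F_2$-dimension in the short exact sequence gives the claim. Next, the elements of $\Sel_4(F)$ are precisely the $\alpha\Fts$ for which $F(\sqrt\alpha\,)/F$ is unramified at all finite primes --- unramified at the odd primes because $(\alpha)$ is an ideal square, and unramified at the primes above $2$ exactly because $\alpha$ is a square modulo $4$; by Kummer theory the maximal elementary abelian $2$-extension of this type is cut out by a subgroup of $F^\times/\Fts$ of $\F_2$-dimension equal to the degree of that extension, which by class field theory is $\dim \Cl^+(F)/\Cl^+(F)^2 = \rho^+$ (the $2$-class field in the strict sense). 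Hence $\dim \Sel_4(F) = \rho^+$; the identical argument with ``unramified everywhere'' in place of ``unramified at the finite primes'' replaces the strict class group by $\Cl(F)$ and yields $\dim \Sel_4^+(F) = \rho$. Finally, feeding $\dim \Sel(F) = \rho+r+s$ and $\dim M_+/M_+^2 = r$ into the first of the three displayed exact sequences and taking the alternating sum of dimensions gives $\dim \Sel^+(F) = (\rho+r+s) - r + \rho^+ - \rho = \rho^+ + s$, completing the first table.

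For the second table, $\dim \Cl(F)/\Cl(F)^2 = \rho$ and $\dim \Cl^+(F)/\Cl^+(F)^2 = \rho^+$ hold by definition of $\rho,\rho^+$. The remaining two entries come from the same Euler-characteristic bookkeeping applied to the last two exact sequences, using $\dim M_4/M_4^2 = n = r+2s$ and $\dim M_4^+/(M_4^+)^2 = 2r+2s$ together with the values of $\dim\Sel$, $\dim\Sel_4$, $\dim\Sel_4^+$ just found: the alternating sums give $\dim \Cl_F\{4\}/2 = \rho^+ - (\rho+r+s) + (r+2s) + \rho = \rho^+ + s$ and $\dim \Cl_F^+\{4\}/2 = \rho - (\rho+r+s) + (2r+2s) + \rho = \rho+r+s$.

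I expect the only real obstacle to be the middle paragraph. One must be sure that the sequences displayed before the theorem are genuinely exact (their exactness is argued in the text via $\ker\nu = \im\mu$ and the identification of $\ker\mu$ with the image of $\Sel(F)$ in $M_\fm/M_\fm^2$), and one must correctly match the Kummer radicand group $\Sel_4(F)$ (resp.\ $\Sel_4^+(F)$) with the $2$-class field in the strict (resp.\ usual) sense --- in particular, getting the ramification condition at the primes above $2$ right, which is exactly where the ``$\bmod 4$'' in the definition of $M_4$ enters. Everything after that is linear algebra over $\F_2$.
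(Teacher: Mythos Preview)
Your proposal is correct and follows essentially the same route as the paper: compute $\dim\Sel(F)$ from (\ref{ESel}), identify $\Sel_4(F)$ and $\Sel_4^+(F)$ with the Kummer radicals of the strict and ordinary $2$-class fields to obtain $\rho^+$ and $\rho$, and then read off $\dim\Sel^+(F)$ and the ray class group dimensions from the alternating sums in the three five-term exact sequences. The only difference is presentational---you make the Euler-characteristic bookkeeping for $\Cl_F\{4\}$ and $\Cl_F^+\{4\}$ explicit, whereas the paper simply records the results in the table.
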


The numbers in these tables suggest a duality between certain 
Selmer and ray class groups. We will see below that this is 
indeed the case: as a matter of fact, this duality is a simple
consequence of the quadratic reciprocity law.

Let me also mention that $\dim \Sel_4^+ = \rho$ is the
existence theorem for quadratic Hilbert class fields,
since it predicts that the maximal elementary abelian 
unramified $2$-extension of $F$ is generated by the 
square roots of $\rho = \dim \Cl(F)/\Cl(F)^2$ elements of $F$.

\section{Associated Unit Groups}
In analogy to the subgroups $\Sel^*(F)$ of the Selmer group
we can define subgroups of $E^*/E^2$ of $E/E^2$ as follows:
\begin{align*}
  E^+    & = \{\eps \in E: \eps \gg 0\}, \\
  E_4\,  & = \{\eps \in E: \eps \equiv \xi^2 \bmod 4\}, \\
  E_4^+  & = \{\eps \in E: \eps \equiv \xi^2 \bmod 4, \ \eps \gg 0\}. 
\end{align*}

Applying the snake lemma to the diagram
$$ \begin{CD}
   1 @>>> E/E^2  @>>> \Sel(F) @>>> \Cl(F)[2] @>>> 1 \\
   @.   @VVV @VVV @VVV @. \\
   1 @>>> M_4/M_4^2 @>>> M_4/M_4^2 @>>> 1 
   \end{CD} $$
provides us with an exact sequence
$$ \begin{CD}
   1 @>>> E_4/E^2 @>>> \Sel_4(F) @>>> \Cl(F)[2]; \end{CD}$$
This (and a similar argument involving $M^+$ instead of $M_4$) implies 

\begin{prop}
If $F$ is a number field with odd class number, then
$$ E/E^2 \simeq \Sel(F), \quad
   E_4/E^2 \simeq \Sel_4(F) \quad \text{and} \quad 
   E^+/E^2 \simeq \Sel^+(F). $$
\end{prop}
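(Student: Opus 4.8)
The plan is to exploit the exact sequences already assembled in the excerpt together with the observation that, when the class number is odd, all the relevant class groups and $2$-torsion subgroups vanish. First I would record that odd class number means $\Cl(F)$ has trivial $2$-part, so $\Cl(F)[2] = 1$ and $\rho = 0$; moreover $\Cl^+(F)$ differs from $\Cl(F)$ only by a subgroup of order dividing $2^r$ (generated by the classes of the infinite primes), so $\Cl^+(F)[2]$ may still be nontrivial, but what matters is that $\Cl(F)[2]=1$ throughout.

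For the first isomorphism I would simply invoke the exact sequence (\ref{ESel}), namely $1 \to E/E^2 \to \Sel(F) \to \Cl(F)[2] \to 1$; since $\Cl(F)[2] = 1$, the inclusion $E/E^2 \hra \Sel(F)$ is onto, giving $E/E^2 \simeq \Sel(F)$. For the second, I would use the exact sequence displayed just before the proposition, $1 \to E_4/E^2 \to \Sel_4(F) \to \Cl(F)[2]$, obtained from the snake lemma applied to the comparison of $1 \to E/E^2 \to \Sel(F) \to \Cl(F)[2] \to 1$ with $1 \to M_4/M_4^2 \to M_4/M_4^2 \to 1$; again $\Cl(F)[2]=1$ forces $E_4/E^2 \simeq \Sel_4(F)$. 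For the third isomorphism I would run the parallel snake-lemma argument with $M^+ = F^\times/F^\times_+$ in place of $M_4$: mapping $\Sel(F) \to M^+/(M^+)^2$ with kernel $\Sel^+(F)$, and $E/E^2 \to M^+/(M^+)^2$ with kernel $E^+/E^2$, one gets $1 \to E^+/E^2 \to \Sel^+(F) \to \Cl(F)[2]$, and vanishing of $\Cl(F)[2]$ finishes it.

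The only point that needs a little care is checking that the vertical maps in the comparison diagrams are genuinely compatible and that the connecting map lands where claimed, so that the snake lemma really yields the stated four-term (here effectively three-term) sequences; but this is exactly the content of the diagram chase already indicated in the excerpt, so there is no real obstacle. In short, each of the three claims reduces to ``the relevant copy of $\Cl(F)[2]$ on the right is trivial,'' which is immediate from the odd class number hypothesis, and the proof is essentially a one-line consequence of the machinery set up above.
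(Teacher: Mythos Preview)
Your proposal is correct and follows exactly the paper's approach: the paper derives the exact sequence $1 \to E_4/E^2 \to \Sel_4(F) \to \Cl(F)[2]$ via the snake lemma, notes that the analogous argument with $M^+$ in place of $M_4$ gives $1 \to E^+/E^2 \to \Sel^+(F) \to \Cl(F)[2]$, and then the proposition follows immediately from $\Cl(F)[2]=1$ together with the sequence (\ref{ESel}). Your aside about $\Cl^+(F)$ is unnecessary but harmless, since, as you correctly observe, only $\Cl(F)[2]$ enters the argument.
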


Now consider the natural map $\pi: \Cl^+(F) \lra \Cl(F)$ sending
an ideal class $[\fa]_+$ to the ideal class $[\fa]$; this 
homomorphism is clearly surjective. This gives the exact sequence
$$ \begin{CD}
   1 @>>> \ker \pi  @>>> \Cl^+(F) @>>> \Cl(F) @>>> 1,  
   \end{CD} $$
where $\ker \pi$ is the group of all ideal classes $[\fa]_+$ 
in the strict sense such that $[\fa] = 1$, i.e., 
$\ker \pi = \{[(\alpha)]: \alpha \in F^\times\}$.

Next consider the map $\eta: M^+ =  F^\times/F^\times_+ \lra \ker \pi$
defined by sending $\alpha F^\times_+$ to $[(\alpha)]_+$; this 
map is well defined and surjective, and its kernel consists of 
classes $\alpha F^\times_+$ that are represented by units, that 
is, $\ker \eta = EF^\times_+/F^\times_+ \simeq E/E^+$. Thus we
have
$$ \begin{CD}
   1 @>>> E/E^+ @>>> M^+ @>>> \ker \pi @>>> 1
   \end{CD} $$
Glueing the last two exact sequences together we get the exact sequence
\begin{equation}\label{ECC}
   \begin{CD}
   1 @>>> E/E^+ @>>> M^+ @>>> \Cl^+(F) @>>> \Cl(F) @>>> 1.  
   \end{CD} \end{equation}
This shows

\begin{prop}
We have $h^+(F) = 2^{r-u}h(F)$, where $u = \dim E/E^+$. 
\end{prop}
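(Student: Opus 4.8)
The plan is to obtain the formula directly from the four-term exact sequence (\ref{ECC}) by an order count, so essentially no new work is needed beyond bookkeeping. First I would record that all four groups occurring in (\ref{ECC}) are finite: $\Cl(F)$ and $\Cl^+(F)$ are finite with orders $h(F)$ and $h^+(F)$; the group $M^+ = F^\times/F^\times_+$ has order exactly $2^r$, since, as noted in Section~2, the signature map identifies it with $(\Z/2\Z)^r$; and $E/E^+$, which injects into $M^+$ through the left-hand map of (\ref{ECC}), is therefore an elementary abelian $2$-group of order $2^u$ with $u = \dim E/E^+$.

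Next I would invoke the standard fact that for an exact sequence of finite abelian groups $1 \to A \to B \to C \to D \to 1$ the alternating product of the orders is $1$, equivalently $|A|\cdot|C| = |B|\cdot|D|$ (this follows by splitting the sequence at its middle map into $1 \to A \to B \to \im \to 1$ and $1 \to \im \to C \to D \to 1$ and multiplying). Applying this to (\ref{ECC}) with $A = E/E^+$, $B = M^+$, $C = \Cl^+(F)$, $D = \Cl(F)$ gives
$$ 2^u \cdot h^+(F) = 2^r \cdot h(F), $$
and dividing through by $2^u$ yields $h^+(F) = 2^{r-u} h(F)$, as claimed.

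There is no genuine obstacle here; the only point that required care, namely the exactness of (\ref{ECC}) at each of its four spots, has already been carried out in the paragraph preceding the statement by glueing the two short exact sequences involving $\ker \pi$. One could equivalently phrase the argument without reference to (\ref{ECC}), working with the snake-lemma output and $|\ker\pi| = |M^+|/|E/E^+| = 2^{r-u}$ together with $h^+(F) = |\ker\pi|\cdot h(F)$, but using (\ref{ECC}) directly is the cleanest route.
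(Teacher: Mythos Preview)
Your proof is correct and is exactly the argument the paper intends: the proposition is stated immediately after the exact sequence (\ref{ECC}) with the phrase ``This shows,'' and you have simply spelled out the implicit order count. The paper offers no additional details, so your write-up is, if anything, more explicit than the original.
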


Thus whereas Selmer groups measure the difference of the ranks of
$\Cl^+(F)$ and $\Cl(F)$, the unit group contains information about
their cardinalities. Trying to extract information on $\rho^+ - \rho$
from the sequence (\ref{ECC}) does not work: note that 
$\Hom(\Z/2\Z,A) \simeq A[2] = \{a \in A: 2a = 0\}$ 
for an additiviely written abelian group $A$. Since 
$\Hom(\Z/2\Z,\,\cdot\,)$ is a left exact functor, and since
$E/E^+$ and $M^+$ are elementary abelian $2$-groups,
(\ref{ECC}) provides us with the exact sequence
$$ \begin{CD}
   1 @>>> E/E^+ @>>> F^\times/F^\times_+ 
     @>>> \Cl^+(F)[2] @>{\pi}>> \Cl(F)[2], \end{CD} $$
where we denoted the restriction of $\pi$ to $\Cl^+(F)[2]$ also
by $\pi$, and where exactness at $\Cl^+(F)[2]$ is checked directly.
Now 
$$\im \pi = \Clp(F) := 
    \{[\fa] \in \Cl(F): \fa^2 = (\alpha), \alpha \gg 0\}, $$
hence we find 

\begin{prop}\label{PCC}
The sequence
$$ \begin{CD}
   1 @>>> E/E^+ @>>> F^\times/F^\times_+ 
     @>>> \Cl^+(F)[2] @>{\pi}>> \Clp(F)@>>> 1 \end{CD} $$
is exact; in particular, we have $\dim \Clp(F) = \rho^+ - r + u$.
\end{prop}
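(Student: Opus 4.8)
The exact sequence
$$ \begin{CD}
   1 @>>> E/E^+ @>>> F^\times/F^\times_+
     @>>> \Cl^+(F)[2] @>{\pi}>> \Cl(F)[2] \end{CD} $$
has already been extracted from (\ref{ECC}) in the paragraph preceding the statement, so the task reduces to two things: first, to identify $\im\pi$ with $\Clp(F)$; second, to read off the dimension formula. I would begin with the identification of the image. By definition $\im\pi$ consists of those classes $[\fa]\in\Cl(F)$ that are the image under $\pi$ of some $2$-torsion class $[\fa]_+$ in $\Cl^+(F)$. Unwinding this, $[\fa]_+$ being $2$-torsion in the strict sense means $\fa^2=(\alpha)$ with $\alpha\gg 0$; and its image in $\Cl(F)$ is just $[\fa]$. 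Conversely any $[\fa]$ with $\fa^2=(\alpha)$, $\alpha\gg 0$, visibly lifts to such a strict class. Hence $\im\pi=\{[\fa]\in\Cl(F):\fa^2=(\alpha),\ \alpha\gg0\}=\Clp(F)$, which is exactly the group defined just above the statement. (One should note in passing that $\Clp(F)$ is indeed a subgroup of $\Cl(F)[2]$, since $\fa^2=(\alpha)$ forces $2[\fa]=0$; this is needed for ``exact at $\Clp(F)$'' to make sense and is immediate.)

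The second step is bookkeeping with $\F_2$-dimensions in the now-established four-term exact sequence
$$ \begin{CD}
   1 @>>> E/E^+ @>>> F^\times/F^\times_+
     @>>> \Cl^+(F)[2] @>{\pi}>> \Clp(F)@>>> 1. \end{CD} $$
All four groups are elementary abelian $2$-groups, so taking alternating sums of dimensions gives
$$ \dim\Clp(F) = \dim\Cl^+(F)[2] - \dim\bigl(F^\times/F^\times_+\bigr) + \dim\bigl(E/E^+\bigr). $$
Now $\dim\Cl^+(F)[2]=\dim\Cl^+(F)/\Cl^+(F)^2=\rho^+$ (a finite abelian group and its $2$-torsion subgroup have the same $2$-rank), $\dim F^\times/F^\times_+ = r$ by the signature isomorphism $M^+\simeq(\Z/2\Z)^r$ recorded in \S2, and $\dim E/E^+ = u$ by definition. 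Substituting yields $\dim\Clp(F)=\rho^+-r+u$, as claimed.

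**Main obstacle.** There is essentially no hard analytic or class-field-theoretic input here: the heavy lifting was done in deriving (\ref{ECC}) and the four-term sequence. The only genuine points requiring care are (a) checking exactness at $\Clp(F)$, i.e. that $\pi\colon\Cl^+(F)[2]\to\Clp(F)$ is surjective onto the group as I have defined it — but this is exactly the image computation above and is essentially a tautology once $\Clp(F)$ is correctly identified — and (b) making sure the alternating-sum count is applied to a genuinely exact sequence of finite $\F_2$-vector spaces with the maps being $\F_2$-linear, which is clear since all four groups are killed by $2$. So the ``hard part'' is really just verifying that $\im\pi$ is described by the stated condition $\fa^2=(\alpha)$ with $\alpha\gg0$ and not some a priori different-looking set; once that unwinding is done, the dimension formula is immediate.
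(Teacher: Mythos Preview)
Your proposal is correct and follows essentially the same approach as the paper: the paper derives the four-term exact sequence in the paragraph immediately preceding the proposition, identifies $\im\pi$ with $\Clp(F)$ by the same unwinding of definitions you give, and leaves the dimension count implicit. Your write-up simply makes the alternating-sum bookkeeping explicit, which is a harmless elaboration.
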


\section{Applications}\label{Sapp}
\subsection{Unit Signatures}
Lagarias observed in \cite{Lag1} that the residue class
modulo $4$ of an element $\alpha \in \cO_F)$ with
$\alpha \Fts \in \Sel(F)$ determines its signature
for quadratic fields $F = \Q(\sqrt{d}\,)$, where
$d = x^2 + 16y^2$. This observation was generalized
in \cite{Lag2,Lag3}; the main result of \cite{Lag3} 
is the equivalence of conditions (1) -- (4) of the
following theorem:

\begin{theorem}
Let $F$ be a number field and put $\rho_4 = \dim \Cl_F\{4\}/\Cl_F\{4\}^2$. 
Then the following assertions are equivalent:
\begin{enumerate}
\item $s=0$, and the image of $\alpha \Fts \in \Sel(F)$ in $M_4/M_4^2$
      determines its signature;
\item $s = 0$ and $\rho^+ = \rho$;
\item $s = 0$ and $\Sel_4(F) \subseteq \Sel^+(F)$;
\item $s = 0$ and the map $\Sel(F) \lra M^+$ is surjective;
\item the image of $\alpha \Fts \in \Sel(F)$ in $M^+$ determines 
      its residue class modulo $4$ up to squares; 
\item $\rho_4 = \rho$;
\item $\Sel^+(F) \subseteq \Sel_4(F)$;
\item the map $\Sel(F) \lra M_4/M_4^2$ is surjective.
\end{enumerate}
\end{theorem}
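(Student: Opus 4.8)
The plan is to show that each of the eight assertions is equivalent to assertion~(2), i.e.\ to the conjunction ``$s=0$ and $\rho^+=\rho$''. Everything will follow from the dimension formulae of Theorem~\ref{TSel} together with two elementary facts. First, $\Sel_4^+(F)=\Sel_4(F)\cap\Sel^+(F)$: this is immediate from the defining exact sequences, since $M_4^+=M_4\oplus M^+$ shows that the map $\Sel(F)\to M_4^+/(M_4^+)^2$ is the pairing of the maps into $M_4/M_4^2$ and into $M^+$, so its kernel is the intersection of the two kernels. Second, $\rho^+\ge\rho$: the surjection $\Cl^+(F)\to\Cl(F)$ induces a surjection $\Cl^+(F)/2\to\Cl(F)/2$, hence $\rho^+\ge\rho$.

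Next I would rewrite the ``map-theoretic'' assertions (1), (4), (5), (8) as statements of linear algebra over $\F_2$. By construction the kernel of $\phi\colon\Sel(F)\to M_4/M_4^2$ is $\Sel_4(F)$, and the kernel of the signature map $\Sel(F)\to M^+$ is $\Sel^+(F)$; moreover the image of $\alpha\Fts$ in $M_4/M_4^2$ is precisely ``the residue class of $\alpha$ modulo $4$ up to squares'', and its image in $M^+$ is precisely ``the signature of $\alpha$''. Since these are all group homomorphisms, assertion~(1) says ``$s=0$ and $\ker\phi\subseteq\ker(\text{signature})$'', i.e.\ ``$s=0$ and $\Sel_4(F)\subseteq\Sel^+(F)$'', which is exactly (3); assertion~(5) says ``$\ker(\text{signature})\subseteq\ker\phi$'', i.e.\ ``$\Sel^+(F)\subseteq\Sel_4(F)$'', which is exactly (7); assertion~(4) says ``$s=0$ and the signature map is onto''; and assertion~(8) says ``$\phi$ is onto''. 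So it suffices to relate (3), (4), (6), (7), (8) to (2).

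Finally I would compute, using $\dim\Sel(F)=\rho+r+s$, $\dim\Sel^+(F)=\rho^++s$, $\dim\Sel_4(F)=\rho^+$, $\dim\Sel_4^+(F)=\rho$, $\dim M^+=r$, $\dim M_4/M_4^2=n=r+2s$ and $\rho_4=\dim\Cl_F\{4\}/\Cl_F\{4\}^2=\rho^++s$. By the first auxiliary fact, $\Sel_4(F)\subseteq\Sel^+(F)$ holds iff $\dim(\Sel_4(F)\cap\Sel^+(F))=\dim\Sel_4(F)$, i.e.\ iff $\rho=\rho^+$; hence (3), and therefore (1), is equivalent to (2). Likewise $\Sel^+(F)\subseteq\Sel_4(F)$ holds iff $\rho^++s=\rho$, and by the second auxiliary fact this forces $s=0$ and $\rho^+=\rho$; hence (7), and therefore (5), is equivalent to (2). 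The image of the signature map has dimension $\dim\Sel(F)-\dim\Sel^+(F)=r-(\rho^+-\rho)$, so it equals $\dim M^+=r$ iff $\rho^+=\rho$; hence (4) is equivalent to (2). The image of $\phi$ has dimension $\dim\Sel(F)-\dim\Sel_4(F)=r+s-(\rho^+-\rho)$, so it equals $\dim M_4/M_4^2=r+2s$ iff $\rho^+-\rho=-s$, which again forces (2); hence (8) is equivalent to (2). And (6) reads $\rho^++s=\rho$, which forces (2). This establishes all the equivalences.

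The only step that needs real care — as opposed to purely formal $\F_2$-dimension counting — is the reformulation in the second paragraph: one must check that ``take the signature'' and ``reduce modulo $4$ up to squares'' are indeed the maps occurring in the three exact sequences that define $\Sel^+(F)$, $\Sel_4(F)$, $\Sel_4^+(F)$, so that their kernels are as asserted, and one must keep track of the clause ``$s=0$'', which is explicit in (1)--(4) but only implicit in (5)--(8), where it is forced by $\rho^+\ge\rho$. That inequality is the single genuinely arithmetic ingredient; the rest is linear algebra fed by Theorem~\ref{TSel}.
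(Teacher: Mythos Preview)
Your proof is correct and rests on the same ingredients as the paper's---the dimension formulae of Theorem~\ref{TSel}, the identity $\Sel_4^+(F)=\Sel_4(F)\cap\Sel^+(F)$, and the inequality $\rho^+\ge\rho$. Your organization is somewhat cleaner (every assertion is reduced directly to~(2) by pure $\F_2$-dimension counting), whereas the paper argues via a mixture of pairwise equivalences and implication chains, but the substance is the same.
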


Actually all these assertions essentially establish the
following exact and commutative diagram (for number fields 
$F$ with $s = \rho^+ - \rho = 0$):
$$ \begin{CD}
   1 @>>> \Sel^+(F) @>>> \Sel(F) @>>> M^+ @>>> 1 \\
   @.  @VVV   @VVV   @VVV  @. \\
   1 @>>> \Sel_4(F) @>>> \Sel(F) @>>> M_4/M_4^2 @>>> 1,
  \end{CD} $$
here the two vertical maps between the Selmer groups
are the identity maps. Conversely, this diagram 
immediately implies each of the claims (1) -- (8)
above.
   
\begin{proof}
Consider the exact sequence
$$  1 \lra  \Sel^+  \lra  \Sel  \lra  M_+
      \lra  \Cl^+(F)/\Cl^+(F)^2  \lra  \Cl(F)/\Cl(F)^2  \lra 1. $$
Clearly $\rho^+ = \rho$ if and only if the map $M^+ \lra  \Cl^+/2$
is trivial, that is, if and only if $\Sel(F) \lra M_+$ is surjective.
This proves (2) $\iff$ (4). 

Similarly, the exact sequence
$$ 1 \lra \Sel_4 \lra \Sel \lra M_4/M_4^2 
             \lra \Cl_F\{4\}/\Cl_F\{4\}^2 \lra \Cl(F)/\Cl(F)^2 \lra 1 $$
shows that (6) is equivalent to (8).

Theorem \ref{TSel} immediately shows that (2) $\iff$ (6).

(2) $\Lra$ (3) \& (7): Since $\Sel_4^+(F) \subseteq \Sel^+(F)$
  and both groups have the same dimension $\rho^+ + s = \rho$, 
  we conclude that $\Sel_4^+(F) = \Sel^+(F)$. A similar
  argument shows that $\Sel_4^+(F) = \Sel_4(F)$.

(3) $\Lra$ (5): assume that $\alpha \equiv \beta \bmod 4$;
 then $\alpha/\beta \in \Sel_4(F) \subseteq \Sel^+(F)$, 
 hence $\alpha$ and $\beta$ have the same signature.

(5) $\Lra$ (7): assume that $\alpha \Fts \in \Sel^+(F)$. 
 Then $\alpha$ and $1$ have the same signature, hence
 they are congruent modulo $4$ up to squares, and this 
 shows that $\alpha \Fts \in \Sel_4(F)$.

(7) $\Lra$ (2): $\Sel^+(F) \subseteq \Sel_4(F)$ implies
  $\Sel^+(F) \subseteq \Sel_4(F) \cap \Sel^+(F) = \Sel_4^+(F)$,
  and now Theorem \ref{TSel} shows that $s=0$ and $\rho^+ = \rho$. 

It remains to show that (1) $\iff$ (3). We do this in two steps.

(1) $\Lra$ (3): Assume that $\alpha \Fts \in \Sel_4(F)$. By 
Lemma \ref{L4} we may assume that $\alpha$ is coprime to $2$,
and hence that $\alpha \equiv \xi^2 \bmod 4$. Since the
residue class determines the signature, $\alpha$ has the
same signature as $\xi^2$, i.e., $\alpha$ is totally positive.

(3) $\Lra$ (1): Assume that $\alpha\Fts, \beta\Fts \in \Sel(F)$,
and that $\alpha \equiv \beta \bmod 4$. Then 
$\alpha/\beta \in \Sel_4(F) \subseteq \Sel^+(F)$, hence $\alpha$
and $\beta$ have the same signature.
\end{proof}

\subsection{The Theorem of Armitage-Fr\"ohlich}
As a simple application of Hecke's results on Selmer groups
we present a proof of the theorem of Armitage and Fr\"ohlich
on the difference between the class groups in the usual and 
in the strict sense. 

We make use of a group theoretical lemma:

\begin{lem}\label{Lgt}
Assume that $A$ is a finite abelian group with subgroups
$B$, $C$ and $D = B \cap C$. Then the inclusions $C \hra A$ and 
$D \hra B$ induce a monomorphism $C/D \lra A/B$; in particular,
we have $(C:D) \mid (A:B)$. 
\end{lem}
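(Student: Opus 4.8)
The plan is to use the inclusion maps directly and check that composing $C \hra A$ with the quotient $A \to A/B$ kills exactly $D$. First I would define $f \colon C \lra A/B$ as the restriction to $C$ of the canonical projection $A \lra A/B$; this is a homomorphism since it is a composite of homomorphisms. The kernel of $f$ consists of those $c \in C$ with $c \in B$, that is, $c \in B \cap C = D$. Hence $f$ induces an injection $\bar f \colon C/D \lra A/B$ by the first isomorphism theorem, which is the asserted monomorphism.

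For the index statement, finiteness of $A$ guarantees all the groups involved are finite, so the monomorphism $C/D \hra A/B$ forces $(C:D) = |C/D|$ to divide $|A/B| = (A:B)$. That is all that is needed.

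The argument is entirely formal, so there is really no main obstacle; the only point requiring a moment's care is checking that the map $\bar f$ is well defined and injective, which is immediate from the identification $\ker f = D$. One should also note that no hypothesis beyond $D = B \cap C$ is used in constructing the monomorphism — finiteness of $A$ is needed only to pass from "monomorphism" to the divisibility of orders.

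\begin{proof}
Let $f \colon C \lra A/B$ be the restriction to $C$ of the canonical projection $\pi \colon A \lra A/B$. Then $f$ is a group homomorphism, and $c \in \ker f$ if and only if $c \in C$ and $c \in B$, i.e. $\ker f = B \cap C = D$. By the first isomorphism theorem $f$ induces a monomorphism $C/D \lra A/B$. Since $A$ is finite, so are $C/D$ and $A/B$, and the existence of this monomorphism gives $(C:D) \mid (A:B)$.
\end{proof}
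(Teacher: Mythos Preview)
Your proof is correct and is exactly the standard argument. The paper itself does not give a proof beyond declaring that ``the proof of this lemma is easy,'' so there is nothing further to compare; your write-up is precisely what the author had in mind.
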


The proof of this lemma is easy. Applying it to $A = \Sel(F)$, 
$B = \Sel_4(F)$, $C = \Sel^+(F)$ and 
$D = \Sel_4^+(F) = \Sel_4(F) \cap \Sel^+(F)$ we get 
$\rho^+-\rho \le \rho - \rho^+ +r$, which gives

\begin{theorem}[Theorem of Armitage-Fr\"ohlich]\label{TAF}
Let $F$ be a number field with $r$ real embeddings. Then the
difference of the $2$-ranks of the class groups in the strict
and in the usual sense is bounded by $\frac{r}2$; since this
difference is an integer, we even have
$$ \rho^+ - \rho \le \Big\lfloor \frac{r}2 \Big\rfloor. $$
\end{theorem}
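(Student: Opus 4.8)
The plan is to deduce Theorem~\ref{TAF} directly from Lemma~\ref{Lgt} together with the dimension formulas recorded in Theorem~\ref{TSel}. All four Selmer groups live inside $\Sel(F)$, so the inclusions are already in place; the only thing to set up is the intersection identity $\Sel_4^+(F) = \Sel_4(F) \cap \Sel^+(F)$. This is immediate from the three defining exact sequences: an element $\alpha\Fts \in \Sel(F)$ lies in $\Sel_4^+(F)$ precisely when its image in $M_4^+/(M_4^+)^2 = M_4/M_4^2 \oplus M^+/(M^+)^2$ vanishes, which happens exactly when both its image in $M_4/M_4^2$ and its image in $M^+/(M^+)^2$ vanish, i.e.\ when it lies in $\Sel_4(F)$ and in $\Sel^+(F)$ simultaneously.

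With that in hand I would apply Lemma~\ref{Lgt} to $A = \Sel(F)$, $B = \Sel_4(F)$, $C = \Sel^+(F)$, and $D = B \cap C = \Sel_4^+(F)$. The lemma yields $(C:D) \mid (A:B)$, and since all these groups are elementary abelian $2$-groups this translates into the inequality of $\F_2$-dimensions
$$ \dim \Sel^+(F) - \dim \Sel_4^+(F) \ \le\ \dim \Sel(F) - \dim \Sel_4(F). $$
Now I substitute the values from the first table of Theorem~\ref{TSel}: $\dim \Sel^+(F) = \rho^+ + s$, $\dim \Sel_4^+(F) = \rho$, $\dim \Sel(F) = \rho + r + s$, and $\dim \Sel_4(F) = \rho^+$. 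The left side becomes $\rho^+ + s - \rho$ and the right side becomes $\rho + r + s - \rho^+$, so the inequality reads $\rho^+ - \rho + s \le \rho - \rho^+ + r + s$, and the $s$ cancels to give $2(\rho^+ - \rho) \le r$, hence $\rho^+ - \rho \le r/2$. Since $\rho^+ - \rho$ is an integer, we may round down to obtain $\rho^+ - \rho \le \lfloor r/2 \rfloor$, which is the assertion.

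The only genuine content beyond bookkeeping is Lemma~\ref{Lgt} itself, and the paper already grants it (with the remark that its proof is easy — the map $C/D \to A/B$ sends $cD \mapsto cB$, is well defined because $D = B\cap C \subseteq B$, and is injective because if $c \in B$ then $c \in B \cap C = D$). So there is no real obstacle here; the "hard part," such as it is, is simply recognizing that the numerology of the Selmer dimension table is exactly rigged so that the divisibility constraint of Lemma~\ref{Lgt} collapses to the Armitage--Fr\"ohlich bound. One should double-check the direction of the inclusions ($\Sel_4^+ \subseteq \Sel^+$ and $\Sel_4^+ \subseteq \Sel_4$, both clear) and that we are not accidentally comparing $(A:B)$ with the wrong index; everything is consistent as written above.
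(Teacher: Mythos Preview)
Your proof is correct and is essentially the same as the paper's: apply Lemma~\ref{Lgt} with $A = \Sel(F)$, $B = \Sel_4(F)$, $C = \Sel^+(F)$, $D = \Sel_4^+(F)$, then read off the dimensions from Theorem~\ref{TSel} to obtain $\rho^+ - \rho \le \rho - \rho^+ + r$. You have simply spelled out the intersection identity and the arithmetic that the paper leaves implicit.
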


This proof of the theorem of Armitage \& Fr\"ohlich \cite{AF} 
is essentially due to Oriat \cite{Ori}. A proof dual to Oriat's 
was given by Hayes \cite{Hay}, who argued using the Galois 
groups of the Kummer extensions corresponding to elements in 
$\Sel(F)$.

Applying the lemma to $A =  \Sel^+(F)$, $B = \Sel_4^+(F)$,
$C = E^+$ and $D = E_4^+$ and using the theorem of Armitage-Fr\"ohlich
we find

\begin{theorem}\label{TG}
Let $F$ be a number field with $r$ real embeddings. Then
$$ \dim E_4^+/E^2 \ge \Big\lceil \frac{r}2 \Big\rceil - \dim E/E^+. $$
\end{theorem}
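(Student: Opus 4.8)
The plan is to apply Lemma~\ref{Lgt} with the data suggested in the paragraph preceding the statement, namely $A = \Sel^+(F)$, $B = \Sel_4^+(F)$, $C = E^+$ (viewed inside $\Sel^+(F)$ via the inclusion $E \hra F^\times$ composed with the quotient by $\Fts$, which indeed lands in $\Sel^+(F)$ since a totally positive unit generates the trivial ideal), and $D = B \cap C$. First I would identify $D$ explicitly: an element of $E^+$ lying in $\Sel_4^+(F)$ is a totally positive unit that is congruent modulo $4$ to a square, so $D = E_4^+/E^2$ in the notation of \S3. Lemma~\ref{Lgt} then yields $(E^+ : E_4^+) \mid (\Sel^+(F) : \Sel_4^+(F))$, hence in the additive $\F_2$-language $\dim E^+/E^2 - \dim E_4^+/E^2 \le \dim \Sel^+(F) - \dim \Sel_4^+(F)$.

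Next I would feed in the numerics from Theorem~\ref{TSel}: $\dim \Sel^+(F) = \rho^+ + s$ and $\dim \Sel_4^+(F) = \rho$, so the right-hand side is $\rho^+ + s - \rho$. Here is the point where the theorem of Armitage--Fr\"ohlich enters: by Theorem~\ref{TAF} we have $\rho^+ - \rho \le \lfloor r/2\rfloor$, but I actually want a bound in the \emph{opposite} direction, so I should instead apply Lemma~\ref{Lgt} in its other incarnation (the one used to prove Theorem~\ref{TAF}, with $A = \Sel(F)$, $B = \Sel_4(F)$, $C = \Sel^+(F)$, $D = \Sel_4^+(F)$) to control $\rho - \rho^+$. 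That inclusion gives $\dim \Sel^+ - \dim \Sel_4^+ \le \dim \Sel - \dim \Sel_4$, i.e. $\rho^+ + s - \rho \le (\rho + r + s) - \rho^+$, which rearranges to $\rho^+ + s - \rho \le \lfloor r/2\rfloor + s - (\rho^+ + s - \rho)$ is not quite it either; cleaner is to simply use $\rho^+ - \rho \le \lceil r/2 \rceil - s$ is false in general, so the honest route is: from Theorem~\ref{TAF}, $\rho - \rho^+ \ge -\lfloor r/2\rfloor$, hence $\rho^+ + s - \rho \le \lfloor r/2 \rfloor + s$; combined with the chain of inequalities this is still not sharp enough, so I expect the actual argument to bound $\dim\Sel^+ - \dim\Sel_4^+$ from above by $\lceil r/2\rceil$ directly using Theorem~\ref{TAF} together with the relation $\lfloor r/2\rfloor + \lceil r/2\rceil = r$.

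Concretely, I would argue: $\dim\Sel^+(F) - \dim\Sel_4^+(F) = \rho^+ + s - \rho \le \lfloor r/2\rfloor + s$ from Armitage--Fr\"ohlich, but to get $\lceil r/2\rceil$ in the final bound I should instead observe that the quantity $\dim E/E^+$ appears because $\dim E^+/E^2 = (r+s) - \dim E/E^+$ (from $\dim E/E^2 = r+s$ and the exact sequence $1 \to E^+/E^2 \to E/E^2 \to E/E^+ \to 1$). Plugging $\dim E^+/E^2 = r + s - \dim E/E^+$ into $\dim E^+/E^2 - \dim E_4^+/E^2 \le \rho^+ + s - \rho$ gives $\dim E_4^+/E^2 \ge r + s - \dim E/E^+ - (\rho^+ + s - \rho) = r - \dim E/E^+ - (\rho^+ - \rho) \ge r - \dim E/E^+ - \lfloor r/2\rfloor = \lceil r/2\rceil - \dim E/E^+$, which is exactly the claim. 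So the final step is just $r - \lfloor r/2\rfloor = \lceil r/2\rceil$.

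The main obstacle, and the only place requiring genuine care rather than bookkeeping, is verifying the hypotheses of Lemma~\ref{Lgt} in the second application: I must check that $E^+$ really embeds in $\Sel^+(F)$ (not merely maps to it) — equivalently that $E^+ \cap \Fts = E^2$ — which follows because a unit that is a square in $F^\times$ is a square in $E$ (its square root generates the trivial ideal, hence is a unit). Once that is in place, everything else is the substitution of the dimension formulas from Theorem~\ref{TSel} and the short exact sequence for $E/E^+$, together with Theorem~\ref{TAF}; I would present these computations compactly and flag the $\lfloor r/2\rfloor \leftrightarrow \lceil r/2\rceil$ switch as the reason the ceiling (rather than the floor) appears in the statement.
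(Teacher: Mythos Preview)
Your argument is correct and follows exactly the route the paper indicates: apply Lemma~\ref{Lgt} with $A=\Sel^+(F)$, $B=\Sel_4^+(F)$, $C=E^+/E^2$, $D=E_4^+/E^2$, combine the resulting inequality with $\dim E^+/E^2 = r+s-\dim E/E^+$ and the values from Theorem~\ref{TSel}, and finish with Theorem~\ref{TAF} via $r-\lfloor r/2\rfloor=\lceil r/2\rceil$. The exploratory detours in your second paragraph are unnecessary and should be cut; the clean chain in your third paragraph is the whole proof.
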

 
According to Hayes \cite{Hay}, this generalizes results
of Greither (unpublished) as well as Haggenm\"uller \cite{Hag}.

Let us now give a simple application of these results. Consider
a cyclic extension $F/\Q$ of prime degree $p$, and assume that
$2$ is a primitive root modulo $p$. Since the cyclic group
$G = \Gal(F/\Q)$ acts on class groups and units groups, we find
(see e.g. \cite{LGA}) that the dimensions of the $2$-class groups
$\Cl_2(F)$ and $\Cl_2^+(F)$, as well as of $E^+/E^2$ and $E_4^+/E^2$
(note that $G$ acts fixed point free on $E^+/E^2$, but not on $E/E^2$) 
as $\F_2$-vector spaces are all divisible by $p-1$. Since 
$\rho^+ - \rho \le \frac{p-1}2$ by Armitage-Fr\"ohlich, we conclude
that $\rho^+ = \rho$. This shows

\begin{prop}\label{POs}
Let $F$ be a cyclic extension of prime degree $p$ over $\Q$, and 
assume that $2$ is a primitive root modulo $p$. Then $\rho^+ = \rho$,
and in particular $F$ has odd class number if and only if there
exist units of arbitrary signature.
\end{prop}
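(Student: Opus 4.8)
The plan is to combine the Galois-module structure of the relevant $\F_2$-vector spaces with the Armitage--Fr\"ohlich bound from Theorem~\ref{TAF}. First I would recall the standard fact (see e.g.\ \cite{LGA}) that when $2$ is a primitive root modulo $p$, the ring $\F_2[G]$ with $G = \Gal(F/\Q)$ decomposes as $\F_2[G] \simeq \F_2 \oplus \F_{2^{p-1}}$, where the first summand corresponds to the augmentation (the $G$-fixed part). Consequently any finite $\F_2[G]$-module $V$ splits as $V = V^G \oplus V'$ with $V'$ a module over the field $\F_{2^{p-1}}$, so that $\dim_{\F_2} V'$ is divisible by $p-1$; in particular $\dim_{\F_2} V \equiv \dim_{\F_2} V^G \pmod{p-1}$.

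Next I would apply this to the two class groups. The groups $\Cl_2(F)/\Cl_2(F)^2$ and $\Cl_2^+(F)/\Cl_2^+(F)^2$ are $\F_2[G]$-modules, and since $F/\Q$ is ramified of prime degree the genus-theory computation (or simply Proposition~\ref{PCC} together with the fact that $\Q$ has trivial class group in both senses) shows their $G$-fixed parts vanish; hence $\rho = \dim \Cl(F)/\Cl(F)^2$ and $\rho^+ = \dim \Cl^+(F)/\Cl^+(F)^2$ are both divisible by $p-1$. Therefore $\rho^+ - \rho$ is divisible by $p-1$. On the other hand Theorem~\ref{TAF} gives $0 \le \rho^+ - \rho \le \lfloor r/2 \rfloor$, and here $F$ is totally real of degree $p$ so $r = p$ and $\lfloor r/2 \rfloor = (p-1)/2 < p-1$. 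A nonnegative multiple of $p-1$ that is $< p-1$ must be $0$, so $\rho^+ = \rho$, which is the first assertion.

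For the second assertion I would use the exact sequence (\ref{ECC}), namely $1 \to E/E^+ \to F^\times/F^\times_+ \to \Cl^+(F) \to \Cl(F) \to 1$, equivalently Proposition~4.3 in the form $h^+(F) = 2^{r-u} h(F)$ with $u = \dim E/E^+$. Having $\rho^+ = \rho$ means the $2$-parts of $\Cl^+(F)$ and $\Cl(F)$ have the same rank; if moreover there exist units of every signature, then $u = r$, so $h^+(F) = h(F)$ and the exact sequence forces $\ker\pi = 1$, i.e.\ $\Cl^+(F) \simeq \Cl(F)$; then $h(F)$ odd $\iff h^+(F)$ odd. For the converse I would argue that $h(F)$ odd forces $E/E^+ \to F^\times/F^\times_+$ to be an isomorphism: by (\ref{ECC}) its cokernel injects into $\Cl^+(F)$ and is killed by $2$, hence lands in $\Cl^+(F)[2]$, whose $2$-rank equals $\rho^+ = \rho = 0$; so $u = r$, meaning $E$ contains units of every signature.

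The main obstacle is the vanishing of the $G$-fixed parts of the class groups modulo squares; everything else is bookkeeping. I would handle it via genus theory for the cyclic extension $F/\Q$: the ambiguous class number formula shows that the $G$-fixed part of $\Cl(F)$ (and similarly $\Cl^+(F)$) has order dividing a power of $2$ determined by the ramified primes and the unit index, but the cleanest route is to note that the transfer (capitulation) map from $\Q$ is trivial and that $(\Cl(F)/\Cl(F)^2)^G$ is spanned by classes of ramified primes, each of which becomes trivial after adjusting by the rational prime below it since $\Cl(\Q) = 1$; so the fixed part is $0$. If one prefers to avoid genus theory entirely, one can instead invoke the cited structural result of \cite{LGA} directly, which already records that $\rho$ and $\rho^+$ are divisible by $p-1$ in this situation, and then the proof reduces to the two-line Armitage--Fr\"ohlich squeeze above.
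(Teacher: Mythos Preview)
Your argument for $\rho^+ = \rho$ is correct and is precisely the paper's approach: invoke the Galois action (the paper just cites \cite{LGA}, while you spell out the decomposition $\F_2[G]\simeq \F_2\oplus\F_{2^{p-1}}$ and the vanishing of the $G$-fixed parts) to see that $\rho$ and $\rho^+$ are both divisible by $p-1$, then squeeze with the Armitage--Fr\"ohlich bound $\rho^+-\rho\le\lfloor p/2\rfloor<p-1$.

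There is, however, a gap in your treatment of the ``in particular'' clause. For the direction ``units of every signature $\Rightarrow$ $h(F)$ odd'' you deduce from $u=r$ only that $h^+(F)=h(F)$, and then record that ``$h(F)$ odd $\iff h^+(F)$ odd''; but this last equivalence does not show that $h(F)$ itself is odd. In fact that implication cannot be proved under the stated hypotheses: the paper's own table lists the real cubic field $F_3(163)$ (and $2$ is a primitive root modulo $3$) with $\Cl_2(F)=\Cl_2^+(F)=(\Z/2\Z)^2$, so $h^+=h$ and hence $E$ surjects onto $F^\times/F^\times_+$, yet $h(F)$ is visibly even. Thus only the implication ``$h(F)$ odd $\Rightarrow$ units of every signature'' survives, and your proof of that direction---via the exact sequence (\ref{ECC}) together with $\rho^+=\rho=0$, forcing the cokernel inside $\Cl^+(F)[2]$ to vanish---is correct. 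The paper itself gives no separate argument for this clause beyond establishing $\rho^+=\rho$.
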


Here are some numerical examples. For primes $p \equiv 1 \bmod n$,
let $F_n(p)$ denote the subfield of degree $n$ of $\Q(\zeta_p)$.
Calculations with {\tt pari} \cite{gp} provide us with the following table:
$$ \begin{array}{cr|cc}
 \rsp n &   p  & \Cl_2(F)   & \Cl_2^+(F) \\ \hline 
 \rsp 3 &  163 &  (2,2)     & (2,2) \\
 \rsp   & 1009 &  (2,2)     & (4,4) \\
 \rsp   & 7687 &  (2,2,2,2) & (2,2,2,2) \\
 \rsp 5 &  941 &  (2,2,2,2) & (2,2,2,2) \\
 \rsp   & 3931 &  (4,4,4,4) & (4,4,4,4) \\
 \rsp 7 &   29 &   1        & (2,2,2) \\
 \rsp   &  491 &  (2,2,2)   & (2,2,2,2,2,2)
\end{array} $$

Now assume that $h^+ > h$, where $h$ and $h^+$ denote the class
numbers of $F$ in the usual and the strict sense. In this case, 
$\dim E^+/E^2 > 0$, hence $\dim E^+/E^2 = p-1$ and therefore 
$\dim E/E^+ = \dim E/E^2 - \dim E^+/E^2 = p - (p-1) = 1$; in 
particular, $-1$ generates $E/E^+$. Using Theorem \ref{TG}
we find that $\dim E_4^+/E^2 \ge \lceil p/2 \rceil - 1 = \frac{p+1}2$,
and now the Galois action implies that $\dim E_4^+/E^2 = p-1$. Thus 
$E^+ = E_4^+$, hence $F(\sqrt{E^+})/F$ is a subfield of degree
$2^{p-1}$ of the Hilbert $2$-class field of $F$.

\begin{prop}
Let $F$ be a cyclic extension of prime degree $p$ over $\Q$, and 
assume that $2$ is a primitive root modulo $p$. If $h^+ > h$,
then every totally positive unit is primary, and $F(\sqrt{E^+})/F$ 
is a subfield of degree $2^{p-1}$ of the Hilbert class field of $F$.
\end{prop}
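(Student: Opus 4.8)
The plan is to run the argument around the $\F_2[G]$-module structure of the relevant groups, where $G=\Gal(F/\Q)$ is cyclic of order $p$. Since $2$ is a primitive root modulo $p$ we may assume $p$ is odd, so $F$ is totally real with $r=n=p$ and $s=0$; moreover $\F_2[G]$ is semisimple (as $2\nmid p$), and the hypothesis on $2$ forces $X^{p-1}+\dots+1$ to be irreducible over $\F_2$, so the only simple $\F_2[G]$-modules are the trivial one and a single module of dimension $p-1$. Consequently every $\F_2[G]$-module without nonzero $G$-fixed vectors has $\F_2$-dimension divisible by $p-1$. First I would record that the $G$-fixed subspace of $E/E^2$ is exactly the line spanned by $-1$, whence $G$ acts without nonzero fixed points on $E^+/E^2$ and on its submodule $E_4^+/E^2$ (and, as in \cite{LGA}, on $\Cl_2(F)$ and $\Cl_2^+(F)$); therefore $\rho$, $\rho^+$, $\dim E^+/E^2$ and $\dim E_4^+/E^2$ are all multiples of $p-1$, and in particular $\dim E^+/E^2\in\{0,p-1\}$ and $u:=\dim E/E^+ = p-\dim E^+/E^2\in\{1,p\}$. (Note $\rho^+=\rho$ by Proposition \ref{POs}, although we shall not need it directly.)

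Next I would feed in the hypothesis $h^+>h$. By the formula $h^+(F)=2^{r-u}h(F)$ this forces $r-u>0$, i.e. $u=1$; hence $\dim E^+/E^2=p-1$ and $-1$ generates $E/E^+$. Now Theorem \ref{TG} gives
$$ \dim E_4^+/E^2 \ \ge\ \Big\lceil \tfrac{p}{2}\Big\rceil - \dim E/E^+ \ =\ \tfrac{p-1}{2}. $$
Since $\dim E_4^+/E^2$ is a multiple of $p-1$ lying in the interval $\bigl[\tfrac{p-1}{2},\,\dim E^+/E^2\bigr]=\bigl[\tfrac{p-1}{2},\,p-1\bigr]$, it must equal $p-1$. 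As $E^2\subseteq E_4^+\subseteq E^+$, this gives $E_4^+=E^+$: every totally positive unit is primary.

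Finally I would check that $F(\sqrt{E^+}\,)/F$ is unramified everywhere: an $\eps\in E^+=E_4^+$ is a unit, so $\sqrt{\eps}$ can ramify only at primes above $2$; the congruence $\eps\equiv\xi^2\bmod 4$ rules out ramification there; and $\eps\gg 0$ rules out ramification at the infinite places. Since $E^+\cap F^{\times\,2}=E^2$ (a unit that is a square in $F^\times$ is the square of a unit, because the ideal it generates is a square ideal), the image of $E^+$ in $F^\times/F^{\times\,2}$ has dimension $\dim E^+/E^2=p-1$, so $F(\sqrt{E^+}\,)/F$ is an elementary abelian $2$-extension of degree $2^{p-1}$ contained in the Hilbert (2-)class field of $F$. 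The only genuinely delicate points I expect are the fixed-point-freeness statements — in particular pinning down the $G$-fixed part of $E/E^2$ as $\langle -1\rangle$, so that it dies in $E^+/E^2$ — and checking that the divisibility pinch at the end really isolates $p-1$: this works precisely because $p\ge 3$ makes $\tfrac{p-1}{2}$ positive while $\tfrac{p-1}{2}<p-1<2(p-1)$, leaving $p-1$ as the unique multiple of $p-1$ in the admissible range. Everything else is bookkeeping with the module dimensions and the exact sequences already established.
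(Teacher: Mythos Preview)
Your argument is correct and follows essentially the same route as the paper's: use $h^+>h$ together with $h^+=2^{r-u}h$ to get $u=\dim E/E^+=1$, hence $\dim E^+/E^2=p-1$; apply Theorem~\ref{TG} to bound $\dim E_4^+/E^2\ge\lceil p/2\rceil-1=(p-1)/2$; and then use the $\F_2[G]$-module structure (no nonzero fixed vectors, unique nontrivial simple module of dimension $p-1$) to pin $\dim E_4^+/E^2$ to $p-1$, whence $E_4^+=E^+$. Your write-up is a bit more explicit than the paper's in spelling out the fixed-point-freeness, the identity $E^+\cap F^{\times\,2}=E^2$, and the unramifiedness check, but there is no substantive difference in strategy.
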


\section{Hecke's Presentation}
We will now explain how Hecke's results in \cite[\S\ 61]{Hec} are 
related to those derived above. The numbers below refer to the 13 
articles in \S\ 61 of Hecke's book:
\begin{enumerate}
\item[1.] $\dim E/E^2 = m := r+s$; (Hecke uses $r_1$ and $r_2$ instead
              of $r$ and $s$);
\item[2.] $\dim F^\times/F^\times_+ = r$;
\item[3.] $\rho = \dim \Cl(F)/\Cl(F)^2$; 
              (Hecke uses $e$ instead of $\rho$);
\item[4.] $\dim \Sel(F) = \rho+r+s$;
\item[5.] $p := \dim \Sel^+(F)$; the image of $\Sel(F)$ in
              $M^+$ has dimension $m+e-p$;
\item[6.] $\rho ^+ = \dim \Cl^+(F)/\Cl^+(F)^2$;
              $p = \rho^+ - r + m = \rho^+ + s$;
\item[7.] $\dim M_4/M_4^2 = n$;
\item[8.] $\dim M_4^+/(M_4^+)^2 = n+r = 2r+2s$;
\item[9.] Hecke introduces the group $M_{4\fl}$ of residue classes 
              modulo $4\fl$ for prime ideals $\fl \mid 2$ and proves
              that $\dim M_{4\fl}/M_{4\fl}^2 = n+r+1$;
\item[10.] $q   := \dim \Sel_4(F)$; $q \le \rho+r+s$;
\item[11.] $q_0 := \dim \Sel_4^+(F)$; 
\item[12.] $\dim \Cl_F\{4\}/\Cl_F\{4\}^2 = 2^{q+s}$;
\item[13.] $\dim \Cl_F^+\{4\}/\Cl_F^+\{4\}^2 = 2^{q_0+r+s}.$
\end{enumerate}
In \S\ 62 Hecke then uses analytic methods (and the quadratic
reciprocity law) to prove that $q = \rho^+$ and $q_0 = \rho$.

\section{Class Fields}
In this section we will realize the Kummer extensions
$F(\sqrt{\Sel^*(F)}\,)/F$ as class fields. 

\subsection{Selmer Groups and Class Fields}
As a first step, we determine upper bounds for the conductor of 
these extensions. To this end, we recall the conductor-discriminant 
formula. For quadratic extensions $K/F$ with $K = F(\sqrt{\omega}\,)$, 
it states that the discriminant of $K/F$ coincides with its
conductor, which in turn is defined as the conductor of the
quadratic character $\chi_\omega = (\frac{\omega}{\cdot})$.

\begin{prop}\label{Pcond}
Consider the Kronecker character $\chi = (\frac{\omega}{\cdot})$, 
where $\omega \in \cO_F$. Then $\chi$ is defined modulo $\fm$ if 
and only if $\omega$ satisfies the conditions $(*)$, and the 
elements $\omega \Fts$ with $\omega$ satisfying $(*)$ form a 
group denoted by $(\dagger)$:
$$ \begin{array}{c|l|l}
    \fm      & \qquad (*) & \quad (\dagger) \\  \hline
\rsp  (4)\infty  & (\omega) = \fa^2 & \Sel(F) \\
\rsp  (4)        & (\omega) = \fa^2, \omega \gg 0 & \Sel^+(F) \\
\rsp   \infty    & (\omega) = \fa^2, \omega \in M_4^2 & \Sel_4(F) \\
\rsp   1         & (\omega) = \fa^2, \omega \gg 0, \omega \in M_4^2 
                                                    & \Sel_4^+(F)
\end{array} $$
\end{prop}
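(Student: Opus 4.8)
The plan is to prove both directions of the equivalence ``$\chi_\omega$ is defined modulo $\fm$ $\iff$ $\omega$ satisfies $(*)$'' by tracking the conductor of the quadratic character $\chi_\omega = (\frac{\omega}{\cdot})$ through the conductor--discriminant formula, treating the finite part at primes $\fp \nmid 2$, the finite part at primes $\fp \mid 2$, and the infinite part separately. First I would recall that for $K = F(\sqrt{\omega}\,)$ the conductor of $\chi_\omega$ equals the relative discriminant $\mathfrak{d}_{K/F}$, so the question reduces to computing $\mathfrak{d}_{K/F}$ in terms of divisibility by the fixed modulus $\fm \in \{(4)\infty, (4), \infty, 1\}$. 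I would normalise $\omega$ using Lemma~\ref{L4}: since we are working modulo squares in $F^\times$, and the conditions $(*)$ are all stable under multiplying $\omega$ by a square, we may and do assume $\omega \in \cO_F$ and, when convenient, that $\omega$ is coprime to any prescribed ideal.

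The core computation is at primes $\fp \nmid 2$: here $\fp \mid \mathfrak{d}_{K/F}$ if and only if $v_\fp(\omega)$ is odd (tame ramification), so $\chi_\omega$ is unramified at all $\fp \nmid 2$ precisely when $(\omega) = \fa^2$ for some ideal $\fa$; this is exactly the condition $\omega\Fts \in \Sel(F)$ and it is common to all four rows of the table. Next, at $\fp \mid 2$ the ramification is wild, and the relevant statement is the standard local criterion that $F_\fp(\sqrt{\omega}\,)/F_\fp$ is unramified iff $\omega$ is congruent to a square in $F_\fp^\times$ up to units that are squares modulo the appropriate power of $\fp$; globally this packages into the condition $\omega \in M_4^2$, i.e. $\omega \equiv \xi^2 \bmod 4$. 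So ``unramified at $2$'' contributes the hypothesis $\omega \in M_4^2$, which is present in the rows with $\fm = \infty$ and $\fm = 1$ (conductor coprime to $(4)$) and absent in the rows with $\fm$ divisible by $(4)$. Finally, the infinite part: $\chi_\omega$ is ramified at a real place $v$ iff $\omega < 0$ under $v$, so ``unramified at infinity'' is the condition $\omega \gg 0$, present exactly in the rows with $\fm = (4)$ and $\fm = 1$. Matching these three independent conditions against the four choices of $\fm$ gives the four rows, and the fact that in each case the set of admissible $\omega\Fts$ is a group (hence the stated identifications with $\Sel(F)$, $\Sel^+(F)$, $\Sel_4(F)$, $\Sel_4^+(F)$, which were defined by precisely these exact sequences in Section~2) is immediate from multiplicativity of $\chi$ and of the valuation and signature maps.

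The main obstacle I anticipate is the precise wild-ramification statement at primes above $2$: one must verify that the dyadic local condition for $F_\fp(\sqrt{\omega}\,)/F_\fp$ to be unramified is captured \emph{exactly} by $\omega$ being a square modulo $4\cO_F$ (and not, say, modulo $4\fp$), and symmetrically that the conductor of $\chi_\omega$ when it \emph{is} ramified at $\fp\mid 2$ divides the $\fp$-part of $(4)$. This is where the choice of the modulus $(4)$ rather than a finer modulus like $4\fl$ matters, and it is the point at which Hecke's article~9 (the group $M_{4\fl}$) enters; I would handle it by a direct local computation with the filtration of units of $\cO_{F_\fp}$, using that $1 + 4\cO_{F_\fp} \subseteq F_{\fp}^{\times 2}$ when $\fp \mid 2$ since $v_\fp(4) \ge 2 v_\fp(2) \ge 2 e_\fp(2)$ and appealing to the usual squaring-is-bijective-on-high-unit-filtration argument, with care taken over the boundary case where $e_\fp = v_\fp(2)$ is large. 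Once this local fact is in hand, the remainder is bookkeeping with the conductor--discriminant formula and the definitions, and I would give exact cross-references to the corresponding statements in \cite[\S\,61]{Hec} as promised in the introduction.
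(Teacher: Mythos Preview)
Your strategy is the same as the paper's --- conductor--discriminant formula, then a case split over odd primes, dyadic primes, and infinite places --- and most of it is fine. There is, however, a genuine slip in your handling of the first row. You assert that ``$\chi_\omega$ is unramified at all $\fp \nmid 2$ precisely when $(\omega) = \fa^2$ for some ideal $\fa$,'' but the forward implication is false: unramifiedness at the odd primes only gives $v_\fp(\omega)$ even for $\fp \nmid 2$ and says nothing about the dyadic valuations. The condition you actually need is that $\cond(\chi_\omega) \mid (4)\infty$ forces $v_\fl(\omega)$ to be even at every $\fl \mid 2$ as well, and this is precisely the step the paper singles out: if $\fl^e \parallel (2)$ and $v_\fl(\omega)$ is odd, then $\fl^{2e+1} \parallel \disc K/F$, which does not divide the $\fl$-part $\fl^{2e}$ of $(4)$. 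Your appeal to Lemma~\ref{L4} to normalise $\omega$ coprime to $2$ cannot rescue this, because that lemma applies only to elements already known to lie in $\Sel(F)$ --- exactly what you are trying to prove in the ``only if'' direction.

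Once you insert this dyadic odd-valuation case (a one-line local different computation, as in the paper), the argument is complete and coincides with the paper's. Your ``main obstacle'' paragraph correctly identifies the companion issue for the \emph{converse} direction (that for $\omega$ coprime to $2$ the dyadic conductor is bounded by $(4)$), but misses this half.
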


\begin{proof}
Every prime ideal with odd norm dividing $(\omega)$ to an odd power 
divides the relative discriminant of $K = F(\sqrt{\omega}\,)$ to an 
odd power (cf. \cite[Satz 119]{Hec}); if $\fl \mid 2$ is a prime 
ideal dividing $(\omega)$ to an odd power, then 
$\fl^{2e+1} \parallel \disc K/F$, where $\fl^e \parallel (2)$. 
Thus if the conductor of $(\omega/\cdot)$ divides $4\infty$, then 
$(\omega)$ must be an ideal square. 

The extension $F(\sqrt{\omega}\,)/F$ is unramified at infinity if
and only if $\omega \gg 0$. It is unramified at $2$ if and only if
$\omega$ is a square modulo $4$, i.e., if and only if $\omega \in M_4^2$.
\end{proof}

This shows that $F(\sqrt{\Sel(F)}\,)$ is contained in the
ray class field modulo $4\infty$. Now let $H(F)$, $H^+(F)$, 
$H_4(F)$, and $H_4^+(F)$ denote the maximal elementary abelian 
$2$-extension of $F$ with conductor dividing $1$, $\infty$, $4$, 
and $4\infty$, respectively. 

If we put
\begin{align*}
  P^+ & = \{(\alpha) \in P: \alpha \gg 0\}, \\
  P_4 & = \{(\alpha) \in P: 
            (\alpha,2) = (1), \ \alpha \equiv \xi^2 \bmod 4\}, \\
  P_4^+ & = P^+ \cap P_4,
\end{align*}
then $P$, $P^+$, $P_4$ and $P_4^+$ are the groups of principal 
ideals generated by elements $\alpha \equiv 1 \bmod \fm$ with 
$\fm = 1$, $\infty$, $(4)$, and $(4)\infty$, respectively. Now we claim

\begin{theorem}
The class fields $H^*(F)$ can be realized as Kummer extensions
$H^*(F) = F(\sqrt{\Sel^*(F)}\,)$ generated by elements of the
Selmer group $\Sel^*(F)$. The ideal groups $\cH^*(F)$ associated
to the extensions $H^*(F)/F$ are also given in the table below:
$$ \begin{array}{c|ccc}
\rsp    H^*(F)   & \Sel^*(F)   & \cH^*(F)  & \fm \\ \hline 
\rsp    H(F)     & \Sel_4^+(F) & I^2 P     & (4)\infty \\
\rsp    H^+(F)   & \Sel_4(F)   & I^2 P^+   & (4) \\
\rsp    H_4(F)   & \Sel^+(F)   & I^2 P_4   & \infty \\
\rsp    H_4^+(F) & \Sel(F)     & I^2 P_4^+ & 1  
\end{array} $$
\end{theorem}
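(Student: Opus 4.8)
The plan is to establish the four identifications $H^*(F) = F(\sqrt{\Sel^*(F)}\,)$ by a counting argument, matching the dimension of the Kummer group against the $2$-rank of the relevant ray class group, and then to identify the associated ideal groups.

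First I would observe that by Proposition \ref{Pcond}, each extension $F(\sqrt{\Sel^*(F)}\,)$ has conductor dividing the corresponding modulus $\fm$ in the first table of that proposition; hence $F(\sqrt{\Sel(F)}\,) \subseteq H_4^+(F)$, and likewise for the three smaller Selmer groups. In particular $F(\sqrt{\Sel_4^+(F)}\,) \subseteq H(F)$, $F(\sqrt{\Sel_4(F)}\,) \subseteq H^+(F)$, and $F(\sqrt{\Sel^+(F)}\,) \subseteq H_4(F)$. Note carefully that the pairing in the theorem's table is \emph{crossed}: $\Sel_4^+$ goes with $H$ (conductor $(4)\infty$), $\Sel_4$ with $H^+$ (conductor $(4)$), $\Sel^+$ with $H_4$ (conductor $\infty$), and $\Sel$ with $H_4^+$ (conductor $1$) --- so the superscripts on $\cH$ get swapped relative to the naive reading, and I would double-check each inclusion against the proposition's table line by line.

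Next I would compute dimensions. Since $[F(\sqrt{\Sel^*(F)}\,):F] = 2^{\dim \Sel^*(F)}$, Theorem \ref{TSel} gives these as $2^{\rho}$, $2^{\rho^+}$, $2^{\rho^++s}$, $2^{\rho+r+s}$ for $\Sel_4^+$, $\Sel_4$, $\Sel^+$, $\Sel$ respectively. On the other hand, $H^*(F)$ is by definition the \emph{maximal} elementary abelian $2$-extension with the given conductor bound, so $[H^*(F):F] = 2^{\dim A/A^2}$ where $A$ is the appropriate ray class group from the second table of Theorem \ref{TSel}: $\dim \Cl(F)/\Cl(F)^2 = \rho$ for $H(F)$, $\dim \Cl^+(F)/\Cl^+(F)^2 = \rho^+$ for $H^+(F)$, $\dim \Cl_F\{4\}/\Cl_F\{4\}^2 = \rho^++s$ for $H_4(F)$, and $\dim \Cl_F^+\{4\}/(\Cl_F^+\{4\})^2 = \rho+r+s$ for $H_4^+(F)$. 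These match the Kummer degrees exactly, so each inclusion established above is an equality. This is the crux: the numerology of Theorem \ref{TSel} was arranged precisely so that the two sides coincide, and the verification that the maximal elementary $2$-extension of given conductor has degree equal to $2^{\dim A/A^2}$ for the matching ray class group $A$ is the standard dictionary of class field theory ($\Gal(H^*(F)/F)$ is the maximal elementary abelian $2$-quotient of the ray class group mod $\fm$, whose $2$-rank equals that of the full ray class group mod $\fm$ since the finite part of the conductor is $1$ or $(4)$, not finer).

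Finally I would identify the ideal group $\cH^*(F)$. By the Artin reciprocity / existence theorem, $\cH^*(F)$ is the subgroup of the group $I$ of ideals (coprime to the modulus) corresponding to $\Gal(F/F)$ inside $\Gal(H^*(F)/F)$, i.e. the preimage of the trivial subgroup; concretely it is the group generated by $I^2$ (since $H^*(F)/F$ is elementary abelian $2$) together with the principal ideals $(\alpha)$ with $\alpha \equiv 1 \bmod \fm$, which is exactly $I^2 P^*$ with $P^*$ as defined before the theorem. To see this cleanly I would argue that a prime $\fp$ splits completely in $F(\sqrt{\omega}\,)$ for every $\omega\Fts$ in the Selmer group in question iff $(\omega/\fp)=1$ for all such $\omega$, and then use the non-degeneracy of the quadratic-residue pairing between $\Sel^*(F)$ and $I/I^2P^*$ --- but since the degrees already match, it suffices to check the inclusion $I^2P^* \subseteq \cH^*(F)$ (clear, as $I^2$ consists of squares hence splits, and $(\alpha)$ with $\alpha\equiv 1\bmod\fm$ splits because $\sqrt\omega$ is congruent to a unit mod the relevant places) and then compare indices: $(I:I^2P^*) = 2^{\dim\Cl_F\{\fm\}/\Cl_F\{\fm\}^2}$ equals $[H^*(F):F]$, forcing equality. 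The main obstacle is bookkeeping rather than depth: keeping the crossed correspondence between Selmer groups and conductors straight, and correctly invoking that the $2$-rank of $\Cl_F\{\fm\}$ for $\fm \mid (4)\infty$ is unchanged under the ramification-at-$2$-and-$\infty$ considerations already packaged in Theorem \ref{TSel}. Once the degrees are pinned down, every inclusion upgrades to equality automatically.
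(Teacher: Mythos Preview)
Your proof is correct and follows essentially the same route as the paper. The one minor difference is in how the equality $H^*(F) = F(\sqrt{\Sel^*(F)}\,)$ is obtained: the paper invokes the full ``if and only if'' in Proposition~\ref{Pcond} (which characterizes \emph{exactly} those $\omega F^{\times 2}$ giving quadratic extensions of conductor dividing $\fm$, so the maximal elementary abelian $2$-extension of that conductor is automatically $F(\sqrt{\Sel^*(F)}\,)$ with no further counting needed), whereas you use only the ``if'' direction to get an inclusion and then appeal to the dimension tables of Theorem~\ref{TSel} to force equality. For the ideal groups $\cH^*(F) = I^2P^*$ both arguments are identical: establish the inclusion $I^2P^* \subseteq \cH^*$ and compare indices.
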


The diagram in Fig. \ref{F1} helps explain the situation.

\begin{figure}[ht!]
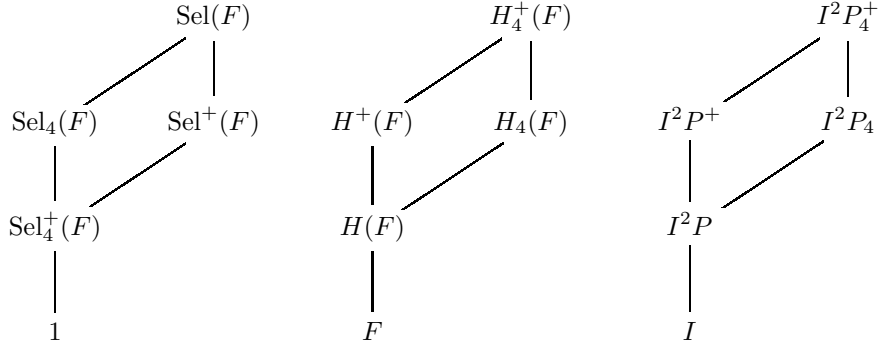

\begin{diagram}[height=.7cm] 
   &             &         & \Sel(F)  &  &         &         & H_4^+(F) &
   &         &         & I^2P_4^+  & \\ 
   &             & \ruLine & \dLine   &  &         & \ruLine & \dLine   &
   &         & \ruLine & \dLine   & \\
   & \Sel_4(F)   &         & \Sel^+(F)&  &  H^+(F) &         &  H_4(F)  & 
   &  I^2P^+ &         &  I^2P_4  & \\  
   &  \dLine     & \ruLine &          &  &  \dLine & \ruLine &          & 
   &  \dLine & \ruLine &          & \\
   & \Sel_4^+(F) &         &          &  &  H(F)   &         &          &
   &  I^2P   &         &          & \\
   &    \dLine   &         &          &  &  \dLine &         &          &
   &  \dLine &         &          & \\
   &      1      &         &          &  &    F    &         &          & 
   &    I    &         &          & 
\end{diagram}
\caption{Class Fields}\label{F1}
\end{figure}

\begin{proof}
The entries in the second (and the last) column follow immediately 
from Prop. \ref{Pcond}. It remains to compute the ideal groups
associated to the extensions $F(\sqrt{\Sel^*(F)}\,)/F$.

The ideal group associated to the Hilbert class field $H^1(F)$
is the group $P = P_F$ of principal ideals in $F$. Let $\cH$ denote the
ideal group associated to the maximal elementary abelian $2$-extension
$H(F)/F$; then $P \subseteq \cH \subseteq I$, where
$I = I_F$ is the group of fractional ideals in $F$, and 
$\cH$ is the minimal such group for which $I/\cH$ is an 
elementary abelian $2$-group. Clearly $\cH$ contains $I^2P$;
on the other hand, $I^2P/P \simeq \Cl(F)^2$, hence
$I/I^2P \simeq \Cl(F)/\Cl(F)^2$ has the right index, and 
we conclude that $\cH = I^2P$ (see Lagarias \cite[p. 3]{Lag2}).

Analogous arguments show that the ideal groups associated to the
maximal elementary abelian $2$-extensions with conductor dividing
$\infty$, $4$ and $4\infty$ are $I^2P^+$, $I^2P_4$ and $I^2P_4^+$,
respectively.
\end{proof}

If $\{\omega_1, \ldots, \omega_\rho\}$ is a basis of $\Sel_4^+(F)$
as an $\F_2$-vector space, and if $\sigma_j$ denotes the Legendre 
symbol $(\omega_j/\,\cdot\,)$, then the Artin symbol of $H(F)/F$
can be written as
$(H(F)/F,\,\cdot\,) = (\sigma_1, \ldots, \sigma_\rho)$.
Since the Artin symbol is defined for all unramified prime ideals
we have to explain what $(\omega/\fa)$ should mean if $\fa$ and
$\omega$ are not coprime. Using Lemma \ref{L4}, for evaluating 
$(\omega/\fa)$ we choose $\omega \Fts = \omega' \Fts$ with 
$(\omega') + \fa = (1)$ and put $(\omega/\fa) := (\omega'/\fa)$.

As is well known, the Artin symbol defines a isomorphism
between the ray class group associated to $H(F)/F$ and the
Galois group $\Gal(H(F)/F) \simeq (\Z/2\Z)^\rho$.
This shows that the kernel of the Artin symbol 
$(H(F)/F,\,\cdot\,): I = I_F \lra \Gal(H/F)$ 
is just $I^2P_F$, that is, the group of all ideals $\fa$ that 
can be written in the form $\fa = (\alpha)\fb^2$. The 
ray class group associated to $H(F)$ is 
$I/I^2P_F \simeq \Cl(F)/\Cl(F)^2$,
and the Artin symbol induces a perfect pairing 
$$  \Cl(F)/\Cl(F)^2 \times \Sel_4^+(F) \lra \mu_2, $$
where $\mu_2$ denotes the group of square roots of $1$.
This pairing is an explicit form of the decomposition law
in $H(F)/F$: a prime ideal $\fp$ in $F$ splits completely
in $H(F)$ if and only if $(H(F)/F,\fp) = 1$, i.e.,
if and only if there is some ideal $\fb$ such that 
$\fp\fb^{-2}$ is principal.

Of course we get similar results for the other Selmer groups:

\begin{theorem}\label{TPP}
Let $F$ be a number field. Then the pairings
\begin{align}
\label{EP1}   \Cl_F^+\{4\}/\Cl_F^+\{4\}^2 \times \Sel(F) & \lra \mu_2 \\
\label{EP2}   \Cl_F\{4\}/\Cl_F\{4\}^2 \times \Sel^+(F)   & \lra \mu_2 \\
\label{EP3}   \Cl^+(F)/\Cl^+(F)^2    \times  \Sel_4(F)   & \lra \mu_2 \\
\label{EP4}     \Cl(F)/\Cl(F)^2      \times \Sel_4^+(F)  & \lra \mu_2 
\end{align}
are perfect.
\end{theorem}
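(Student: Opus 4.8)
The plan is to prove all four pairings simultaneously by recognizing each one as the Artin pairing attached to a Kummer extension realized as a class field, exactly as was done in the text for the pairing $\Cl(F)/\Cl(F)^2 \times \Sel_4^+(F) \to \mu_2$. For each row of the table in the previous theorem, we have a class field $H^*(F) = F(\sqrt{\Sel^*(F)}\,)$ with associated ideal group $\cH^*(F) = I^2 P^*$, and the Artin map identifies $\Gal(H^*(F)/F)$ with the ray class group $I/\cH^*(F)$. The key observation, which I would state at the outset, is that $I/I^2 P \simeq \Cl(F)/\Cl(F)^2$, and more generally $I/I^2 P^+ \simeq \Cl^+(F)/\Cl^+(F)^2$, $I/I^2 P_4 \simeq \Cl_F\{4\}/\Cl_F\{4\}^2$, and $I/I^2 P_4^+ \simeq \Cl_F^+\{4\}/\Cl_F^+\{4\}^2$. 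Thus each of the four ray class groups in the statement is precisely the Galois group of the corresponding extension $H(F)$, $H^+(F)$, $H_4(F)$, $H_4^+(F)$ via the Artin symbol.

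The pairing itself is then defined by $([\fa], \omega\Fts) \mapsto (\omega/\fa)$, where we use the convention introduced just before Theorem \ref{TPP} to make sense of $(\omega/\fa)$ even when $\fa$ and $\omega$ are not coprime (replace $\omega$ by a representative $\omega'$ of the same Selmer class that is coprime to $\fa$, using Lemma \ref{L4}). To verify this is well defined on the left I would check that if $\fa$ lies in the ideal group $\cH^*(F) = I^2 P^*$, then $(\omega/\fa) = 1$ for every $\omega \in \Sel^*(F)$: writing $\fa = (\alpha)\fb^2$ with $\alpha \equiv 1 \bmod \fm$ (the appropriate modulus), multiplicativity of the symbol reduces this to $(\omega/(\alpha)) = 1$, which follows from the defining property of the conductor in Prop. \ref{Pcond} — namely that $\chi_\omega = (\omega/\cdot)$ has conductor dividing $\fm$ precisely because $\omega$ satisfies the conditions $(*)$, so $\chi_\omega$ is trivial on elements $\equiv 1 \bmod \fm$. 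Well-definedness on the right (independence of the chosen representative $\omega'$) is immediate from quadratic reciprocity applied to the quotient of two representatives, which is a square.

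For nondegeneracy I would argue on both sides using Artin reciprocity. Nondegeneracy in the Selmer variable: if $(\omega/\fa) = 1$ for all $\fa \in I$, then the Artin symbol of $F(\sqrt{\omega}\,)/F$ is trivial, forcing $F(\sqrt{\omega}\,) = F$, i.e. $\omega \in \Fts$, so $\omega\Fts$ is trivial in $\Sel^*(F)$. Nondegeneracy in the ideal-class variable: if $(\omega/\fa) = 1$ for every $\omega \in \Sel^*(F)$, then the Artin symbol $(H^*(F)/F, \fa)$ is trivial on the generators $\sqrt{\omega_j}$ of $H^*(F)$, hence $\fa$ splits completely in $H^*(F)$, which by class field theory means $\fa \in \cH^*(F)$, so $[\fa]$ is trivial in the ray class quotient. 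Finally, perfectness follows by a dimension count: Theorem \ref{TSel} gives $\dim \Sel^*(F) = \dim \Cl^{**}/(\Cl^{**})^2$ for the matched pair in each row (for instance $\dim \Sel(F) = \rho + r + s = \dim \Cl_F^+\{4\}/\Cl_F^+\{4\}^2$), so a pairing of finite-dimensional $\F_2$-vector spaces of equal dimension that is nondegenerate on one side is automatically perfect.

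The main obstacle I anticipate is bookkeeping the four moduli consistently: the matching in Theorem \ref{TPP} pairs $\Sel(F)$ with $\Cl_F^+\{4\}$, $\Sel^+(F)$ with $\Cl_F\{4\}$, and so on, which is the \emph{opposite} crossing from the table of conductors in Prop. \ref{Pcond} and the preceding theorem (where $H(F)$, with the smallest conductor, corresponds to $\Sel_4^+(F)$, the smallest Selmer group). One must be careful that the ideal group $\cH^*$ appearing as the kernel of the Artin map for $H^*(F) = F(\sqrt{\Sel^*(F)}\,)$ is indeed $I^2 P^{\flat}$ where $\flat$ is the \emph{dual} decoration, so that the quotient $I/\cH^*$ is the ray class group listed opposite $\Sel^*(F)$ in the statement; this is exactly the content of the table in the previous theorem and requires no new work beyond quoting it. Once this indexing is pinned down, the argument for each of (\ref{EP1})–(\ref{EP4}) is the verbatim analogue of the $\Cl(F)/\Cl(F)^2 \times \Sel_4^+(F)$ case already carried out in the text, so I would present the first pairing in full and then remark that the remaining three follow identically.
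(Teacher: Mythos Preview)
Your proposal is correct and follows exactly the paper's approach: the paper carries out the Artin-pairing argument in detail for $\Cl(F)/\Cl(F)^2 \times \Sel_4^+(F)$ and then states Theorem~\ref{TPP} with the remark ``of course we get similar results for the other Selmer groups,'' which is precisely the template you have fleshed out. One small quibble: well-definedness on the right does not require quadratic reciprocity---if $\omega' = \omega''\gamma^2$ with $\omega',\omega''$ coprime to $\fa$, then $\gamma$ is automatically coprime to $\fa$ and $(\gamma^2/\fa)=1$ by multiplicativity alone.
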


The claims in this theorem are equivalent to Hecke's 
theorem 171, 173, 172, and 170, respectively. For totally
complex fields with odd class number they were first proved
by Hilbert \cite[Satz 32, 33]{HRQ}, and the general proofs
are due to Furtw\"angler \cite{Fu2}.

\subsection{Selmer Groups and Quadratic Reciprocity}
Reciprocity laws may be interpreted as decomposition laws in 
abelian extensions; deriving explicit formulas from such general
results is, however, a nontrivial matter. Hecke proved an explicit
quadratic reciprocity law in number fields using quadratic Gauss 
sums, and then derived the existence of $2$-class fields from his 
results. Furtw\"angler, on the other hand, used the existence of 
class fields to derive the reciprocity law:

\begin{theorem}\label{TQR}[Quadratic Reciprocity Law]
Let $F$ be a number field, and let $\alpha, \beta \in \cO_F$
be coprime integers with odd norm. Assume moreover that 
$\alpha$ and $\beta$ have coprime conductors. Then 
$(\frac{\alpha}{\beta}) = (\frac{\beta}{\alpha})$. 
\end{theorem}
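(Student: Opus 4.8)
The plan is to derive the reciprocity law $(\frac{\alpha}{\beta}) = (\frac{\beta}{\alpha})$ from the class field structure established above, following Furtw\"angler's route. The key observation is that when $\alpha$ and $\beta$ have coprime conductors, the product formula for the quadratic Hilbert symbol reduces to a statement about decomposition in the relevant ray class fields. Concretely, let $\fc_\alpha$ and $\fc_\beta$ denote the conductors of the characters $\chi_\alpha = (\frac{\alpha}{\cdot})$ and $\chi_\beta = (\frac{\beta}{\cdot})$. Since $\alpha,\beta$ have odd norm and coprime conductors, each $\chi$ is unramified at the prime divisors of the other's conductor, so $(\frac{\alpha}{\beta})$ and $(\frac{\beta}{\alpha})$ are both well-defined via the Artin symbol.

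First I would reduce to the prime-power case by multiplicativity: it suffices to treat $\beta$ a prime element $\pi$ with $(\pi) = \fp$ a prime ideal of odd norm, coprime to $\fc_\alpha$. Then $(\frac{\alpha}{\pi}) = (\frac{\alpha}{\fp}) = \chi_\alpha(\fp)$ is the image of $\fp$ under the Artin symbol of the quadratic extension $F(\sqrt{\alpha}\,)/F$, whose conductor is $\fc_\alpha \mid 4\fc_\alpha$. Second, I would interpret $(\frac{\pi}{\alpha})$ similarly: writing $(\alpha)$ in terms of its prime factorization and using that $\alpha$ satisfies conditions $(*)$ up to conductor at the primes dividing $\fc_\beta = (\pi)$ — here I would invoke Prop.~\ref{Pcond} to locate $\alpha F^{\times 2}$ in the appropriate Selmer group after adjusting by a square — so that $(\frac{\pi}{\alpha})$ becomes the Artin symbol of $\fp$ in a Kummer extension attached to $\Sel^*(F)$ for the matching $*$. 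Third, the perfect pairings of Theorem~\ref{TPP} (equivalently Hecke's theorems 170--173) identify these two Artin symbols: the pairing $\Cl_F^+\{4\}/(\cdot)^2 \times \Sel(F) \to \mu_2$ and its companions are symmetric in the sense that evaluating the class of $\fp$ against $\alpha F^{\times 2}$ gives the same value whether computed as "$\fp$ in $F(\sqrt\alpha)$'' or as "$\alpha$ against $[\fp]$'' — this symmetry of the pairing is precisely the content of the reciprocity law, and conversely Hecke's analytic proof that $q = \rho^+$, $q_0 = \rho$ (mentioned after the table in \S 61) is what makes the pairings perfect.

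The main obstacle will be bookkeeping the conductors and signatures correctly so that the four pairings in Theorem~\ref{TPP} assemble into the single clean statement $(\frac{\alpha}{\beta}) = (\frac{\beta}{\alpha})$ without spurious factors at $2$ or at the real places. Specifically, when $\alpha$ or $\beta$ is not totally positive, or when $2 \mid \fc_\alpha\fc_\beta$, one must track the contributions at the infinite primes and at primes above $2$ on both sides and verify they cancel; this is the analogue of the ``missing'' local symbols at $2$ and $\infty$ in the classical product formula $\prod_v (\frac{\alpha,\beta}{v}) = 1$. I expect to handle this by choosing the representatives of $\alpha F^{\times 2}$ and $\beta F^{\times 2}$ judiciously (Lemma~\ref{L4} gives the freedom to make them coprime to any fixed modulus) and then checking that, under the coprime-conductor hypothesis, the Hilbert symbols $(\frac{\alpha,\beta}{\fl})$ for $\fl \mid 2\infty$ contribute trivially because one of $\alpha,\beta$ is already a local square there. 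Once the local analysis at the bad primes is dispatched, the equality at the odd primes is a formal consequence of the perfectness and symmetry of the pairings in Theorem~\ref{TPP}.
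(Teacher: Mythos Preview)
Your proposal has a genuine gap. The pairings in Theorem~\ref{TPP} pair a ray class group with a \emph{Selmer} group, but the $\alpha,\beta$ in the statement are arbitrary coprime integers of odd norm: there is no reason for $(\alpha)$ or $(\beta)$ to be an ideal square in $F$, so neither $\alpha F^{\times 2}$ nor $\beta F^{\times 2}$ lies in any $\Sel^*(F)$, and Theorem~\ref{TPP} simply does not apply to them in $F$. Your plan to ``adjust by a square'' via Prop.~\ref{Pcond} cannot repair this, since multiplying $\alpha$ by an element of $F^{\times 2}$ does not change whether $(\alpha)$ is an ideal square. The ``symmetry of the pairing'' you invoke is also not a property those pairings have: each pairs two \emph{different} groups, and evaluating $[\fp]$ against $\omega$ yields $(\frac{\omega}{\fp})$ only --- there is no second slot in which $\fp$ acts as a Selmer element to produce $(\frac{\fp}{\omega})$. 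Finally, appealing to the product formula $\prod_v (\frac{\alpha,\beta}{v}) = 1$ is circular, as that formula is the reciprocity law in Hilbert's formulation.

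The paper's (Furtw\"angler's) proof supplies precisely the missing idea: pass to the auxiliary extension $K = F(\sqrt{\alpha\beta})$. Every odd prime dividing $\alpha$ or $\beta$ to an odd power ramifies in $K/F$, so $\alpha\cO_K$ and $\beta\cO_K$ \emph{become} ideal squares in $K$; the coprime-conductor hypothesis then forces $L = K(\sqrt{\alpha}) = K(\sqrt{\beta})$ to be unramified everywhere over $K$, i.e.\ $\alpha K^{\times 2} \in \Sel_4^+(K)$. Now the pairing~(\ref{EP4}) applied \emph{in $K$} is legitimate: writing $\alpha\cO_K = \fa^2$ and $\beta\cO_K = \fb^2$ one has $\fa \sim \fb$ (their quotient is $(\sqrt{\alpha\beta})$), so $(\frac{\alpha}{\fb})_K = (\frac{\alpha}{\fc})_K = (\frac{\beta}{\fc})_K = (\frac{\beta}{\fa})_K$ for any $\fc \sim \fa$ coprime to $2\fa\fb$, using that $K(\sqrt{\alpha}) = K(\sqrt{\beta})$. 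Lifting the symbols back to $F$ gives $(\frac{\alpha}{\beta}) = (\frac{\beta}{\alpha})$. The essential point you are missing is that Theorem~\ref{TPP} is not applied in $F$ at all, but in a quadratic extension of $F$ manufactured so that $\alpha$ lands in the right Selmer group.
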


The conductor of $\alpha \in F^\times$ is by definition the conductor 
of the quadratic extension $F(\sqrt{\alpha}\,)/F$. Sufficient conditions
for coprime integers $\alpha, \beta \in \cO_F$ to have coprime conductors
are
\begin{itemize}
\item $\alpha$ is primary and totally positive;
\item $\alpha$ is primary and $\beta$ is totally positive.
\end{itemize}
By the last pairing in Theorem \ref{TPP}, we see that
$(\frac{\omega}{(\alpha)}) = 1$ for all $\alpha \in F^\times$
and $\omega \in \Sel_4^+(F)$; in particular, we have
$(\frac{\omega}{\fa}) = (\frac{\omega}{\fb})$ for all ideals
$\fa \sim \fb$ in the same ideal class, or, more generally, for all 
ideals in the same coset of $\Cl(F)/\Cl(F)^2$. By applying this
observation to certain quadratic extensions of $F$, Furtw\"angler
was able to prove the quadratic reciprocity law in $F$:

\begin{proof}[Proof of Theorem \ref{TQR}]
Put $K = F(\sqrt{\alpha\beta})$; then 
$L = K(\sqrt{\alpha}) = K(\sqrt{\beta})$, and since 
$\alpha\Fts \in \Sel_4^+(F)$ and $(\alpha,\beta) = 1$, 
we conclude that $L/K$ is unramified everywhere.
Now $\alpha \cO_L = \fa^\ell$ and $\beta\cO_L = \fb^\ell$; 
moreover $\fa \sim \fb$ since these ideals differ by the 
principal ideal generated by $\sqrt{\alpha\beta}$. Let 
$\fc$ be an ideal in $[\fa] \in \Cl(K)$ that is coprime to 
$2\fa\fb$. Let $(\frac{\,\cdot\,}{\,\cdot\,})$ 
and $(\frac{\,\cdot\,}{\,\cdot\,})_K^{\phantom{1}}$ denote the 
quadratic residue symbols in $F$ and $K$, respectively. 
Then $(\frac{\alpha}{\beta}) = (\frac{\alpha}{\fb})_K^{\phantom{1}}$
and $(\frac{\beta}{\alpha}) = (\frac{\beta}{\fa})_K^{\phantom{1}}$  by 
\cite[Prop. 4.2.]{LemRL}, 
$(\frac{\alpha}{\fb})_K^{\phantom{1}} 
      = (\frac{\alpha}{\fc})_K^{\phantom{1}}$ since $\fb \sim \fc$, 
$(\frac{\alpha}{\fc})_K^{\phantom{1}}
      = (\frac{\beta}{\fc})_K^{\phantom{1}}$ since 
$K(\sqrt{\alpha}) = K(\sqrt{\beta})$, and by going
backwards we find  $(\frac{\alpha}{\beta}) = (\frac{\beta}{\alpha})$ 
as claimed. 
\end{proof}

\subsubsection*{The First Supplementary Law of Quadratic Reciprocity}
The fact that the pairing 
$$\Cl_F^+\{4\}/\Cl_F^+\{4\}^2 \times \Sel(F) \lra \mu_2: \ \ 
 \la [\fa], \omega \ra  \too \Big(\frac{\omega}{\fa}\Big) $$
is perfect can be made explicit as follows:

\begin{theorem}\label{TFSL}
Let $\fa$ be an integral ideal with odd norm in some number field $F$. 
Then the following assertions are equivalent:
\begin{enumerate}
\item there is an integral ideal $\fb$ and some 
      $\alpha \equiv 1 \bmod 4\infty$ such that $\fa \fb^2 = (\alpha)$;
\item we have $(\frac{\omega}{\fa}) = 1$ for all $\omega \in \Sel(F)$.
\end{enumerate}
\end{theorem}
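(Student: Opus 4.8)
The plan is to obtain the theorem directly from the perfectness of the pairing~(\ref{EP1}) in Theorem~\ref{TPP}; the statement is essentially an explicit reformulation of that perfectness, so the substantive input is already at hand and the work is mostly translation.

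First I would reinterpret condition~(1) as the triviality of the class of $\fa$ in $\Cl_F^+\{4\}/\Cl_F^+\{4\}^2$. If $\fa\fb^2=(\alpha)$ with $\alpha\equiv1\bmod4\infty$ and $\fa$ of odd norm, then $v_\fl(\alpha)=0$ for every prime $\fl\mid2$ (an element with $v_\fl(\alpha)>0$ cannot satisfy $v_\fl(\alpha-1)>0$), so $\fb$ is automatically coprime to $2$, and the equation says precisely that $[\fa]_{4\infty}=[\fb]_{4\infty}^{-2}$ in $\Cl_F^+\{4\}$. Since inversion is a bijection of $\Cl_F^+\{4\}$ and, by its very definition, $\Cl_F^+\{4\}$ is a quotient of the group of ideals coprime to $2$, the classes of the form $[\fb]_{4\infty}^{-2}$ are exactly the squares in $\Cl_F^+\{4\}$. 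Hence (1) holds if and only if $\fa$ becomes trivial in $\Cl_F^+\{4\}/\Cl_F^+\{4\}^2$.

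Next I would use Theorem~\ref{TPP}: the assignment $\la[\fa],\omega\ra\mapsto(\frac{\omega}{\fa})$---with $(\frac{\omega}{\fa})$ evaluated, as explained just before Theorem~\ref{TFSL}, on a representative of $\omega$ made coprime to $\fa$ by Lemma~\ref{L4}---is a well-defined pairing on $\Cl_F^+\{4\}/\Cl_F^+\{4\}^2\times\Sel(F)$ which is perfect. Granting this, both implications are immediate. Implication (1)$\Rightarrow$(2): if $\fa$ is trivial in $\Cl_F^+\{4\}/\Cl_F^+\{4\}^2$ it pairs trivially with every $\omega\in\Sel(F)$. Implication (2)$\Rightarrow$(1): if $(\frac{\omega}{\fa})=1$ for all $\omega\in\Sel(F)$, then the class of $\fa$ lies in the left kernel of~(\ref{EP1}); perfectness forces this kernel to be trivial, so $\fa$ is a square in $\Cl_F^+\{4\}$, and by the previous paragraph this is condition~(1).

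If one prefers not to invoke Theorem~\ref{TPP} for the easy direction (1)$\Rightarrow$(2), one can argue by hand: pick (Lemma~\ref{L4}) a representative $\omega\in\cO_F$ coprime to $2\fa\fb$, so that $(\frac{\omega}{\fa})=(\frac{\omega}{\fa\fb^2})(\frac{\omega}{\fb})^{-2}=(\frac{\omega}{\alpha})$; since $\alpha\equiv1\bmod4$ and $\alpha\gg0$, it is primary and totally positive, so by the coprimality criterion following Theorem~\ref{TQR} the conductor of $\alpha$ is coprime to that of $\omega$, and the quadratic reciprocity law yields $(\frac{\omega}{\alpha})=(\frac{\alpha}{\omega})$; finally, writing $(\omega)=\fc^2$, we get $(\frac{\alpha}{\omega})=(\frac{\alpha}{\fc})^2=1$. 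Either way, the only serious ingredient is the perfectness of~(\ref{EP1}); that is the real obstacle in principle, but it is already established in Theorem~\ref{TPP} (and, at bottom, in the quadratic reciprocity law in $F$). The part genuinely new to this theorem is the elementary reduction of the first paragraph, together with the fact---again part of Theorem~\ref{TPP}---that $(\frac{\omega}{\fa})$ is well defined and depends on $\fa$ only through its class in $\Cl_F^+\{4\}/\Cl_F^+\{4\}^2$.
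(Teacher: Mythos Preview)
Your proposal is correct and matches the paper's approach: the paper does not give a separate proof of Theorem~\ref{TFSL} but presents it explicitly as ``making explicit'' the perfectness of the pairing~(\ref{EP1}) in Theorem~\ref{TPP}, and your argument carries out precisely that translation. The only step the paper leaves implicit---your first paragraph's identification of condition~(1) with triviality of $[\fa]$ in $\Cl_F^+\{4\}/\Cl_F^+\{4\}^2$---is handled correctly (one small point you might make explicit: for the converse direction you may need to clear denominators of the fractional $\fb$ by an element $\gamma\equiv1\bmod4\infty$ to obtain an \emph{integral} $\fb$ as demanded by the statement, but this is routine).
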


This result is Hilbert's version of the first supplementary law of
quadratic reciprocity in number fields $F$ (see Hecke \cite[Satz 171]{Hec}).
In fact, for $F = \Q$ this is the first supplementary law of quadratic
reciprocity: since $\Q$ has class number $1$, condition (1) demands
that an ideal $(a)$ is generated by some positive $a \equiv 1 \bmod 4$;
moreover, $\Sel(\Q)$ is generated by $\omega = -1$, hence 
Theorem \ref{TFSL} states that $(\frac{-1}{(a)}) = 1$ if and only if 
$a > 0$ and $a \equiv 1 \bmod 4$ (possibly after replacing the generator
$a$ of $(a)$ by $-a$).

Similarly, the fact that the pairing   
$\Cl_F\{4\}/\Cl_F\{4\}^2 \times \Sel^+(F) \lra \mu_2$ 
is perfect is equivalent to Hecke \cite[Satz 173]{Hec}).

\section{Miscellanea}
Apart from the applications of Selmer groups discussed in 
Section \ref{Sapp}, the results presented so far go back
to Hecke. In this section we will describe a few developments
that took place afterwards. Unfortunately, reviewing e.g. 
Oriat's beautiful article \cite{Ori} and the techniques 
of Leopoldt's Spiegelungssatz (reflection theorem) would 
take us too far afield.

\subsection{Reciprocity Laws}
In \cite{KS}, Knebusch \& Scharlau presented a simple proof of
Weil's reciprocity law based on the theory of quadratic forms
and studied the structure of Witt groups. In their investigations, 
they came across a group they denoted by $P/\Fts$, which 
coincides with our $\Sel(F)$, and they showed that 
$\dim \Sel(F) = \rho+r+s$ in \cite[Lemma 6.3.]{KS}. In the 
appendix of \cite{KS}, they studied $\Delta^+ = \Sel_4^+(F)$ 
and proved that the pairings (\ref{EP3}) and (\ref{EP4}) are 
nondegenerate in the second argument. 

Kolster took up these investigations in \cite{Kol} and 
generalized the perfect pairings above to certain general 
class groups and Selmer groups whose definitions depend on 
a finite set of primes $S$. In fact, let $F$ be a number field, 
and let $v_\fp(\alpha)$ denote the exponent of $\fp$ in the 
prime ideal factorization of $(\alpha)$. For finite sets $S$ 
of primes in $F$ containing the set $S_\infty$ of infinite 
primes, let $I_S$ denote the set of all ideals coprime to 
the finite ideals in $S$ and to all dyadic primes, and define
\begin{align*}
 D(S) & = \{\alpha \in F^\times: v_\fp(\alpha) \equiv 0 \bmod 2
         \ \text{for all} \ \fp \not\in S\}/\Fts,  \\
 R(S) & = I_S^2/I_S^2 \cdot \{(\alpha) \in P_4: 
            \alpha \in F_\fp^{\times\,2} \ \text{for}\ \fp \in S\}.
\end{align*}
Then $D(S_\infty) = \Sel(F)$ and $R(S_\infty) = I^2/I^2P_4^+$.
One of Kolster's tools is the perfect pairing (see \cite[p. 86]{Kol})
$$ D(S) \times R(S) \lra \mu_2, $$
which specializes to the perfect pairing (\ref{EP1}) in 
Thm. \ref{TPP} for $S = S_\infty$. 

We also remark that Kahn \cite{Kahn} studied connections between
groups related to Selmer groups and pieces of the Brauer group
Br$(\Q)$.

\subsection{Capitulation}
Let $L/K$ be an extension of number fields. Then the maps
\begin{align*}
 j_{K \to L}: & \Sel(K) \lra \Sel(L); j(\alpha \Kts) = \alpha \Lts \\
\intertext{and}
 N_{L/K}:     & \Sel(L) \lra \Sel(K); N(\alpha \Lts) = N(\alpha) \Kts
\end{align*}
are well defined homomorphism. Since $N_{L/K} \circ j_{K \to L}$
is raising to the $(L:K)$-th power, $j_{K \to L}$ is injective
and $N_{L/K}$ is surjective for all extensions $L/K$ of odd degree.

For extensions of even degree, on the other hand, these maps
have, in general, nontrivial kernels and cokernels. 
In fact, for $K = \Q(\sqrt{10}\,)$ we have 
$$ \Sel_4(K) = \Sel^+(K) = \Sel_4^+(K) = \la 5 \ra, $$
and in the quadratic extension $L = \Q(\sqrt{2},\sqrt{5}\,)$ we have 
$$ \Sel_4(L) = \Sel^+(L) = \Sel_4^+(L) = 1 $$ 
because $L$ has class number $1$ in the strict sense.
More generally, it is obvious that e.g. $\Sel_4^+(K)$ capitulates in the
extension $L = K(\sqrt{\Sel_4^+(K)}\,)$. 

Note that $E^+/E^2 \hra \Sel^+(F)$. Garbanati \cite[Thm. 2]{Gar2}
observed that, for extensions $L/K$ of totally real number fields,
$E_L^+ = E_L^2$ implies that $E_K^+ = E_K^2$. This was generalized
by Edgar, Mollin \& Peterson \cite{EMP}:

\begin{prop}
Let $L/K$ be an extension of totally real number fields. Then 
$\dim E_K^+/E_K^2 \le \dim E_L^+/E_L^2$.
\end{prop}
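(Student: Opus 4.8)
The plan is to show that the natural inclusion map $j_{K\to L}\colon\Sel^+(K)\to\Sel^+(L)$ restricts to an \emph{injection} on the unit part $E_K^+/E_K^2$. Recall from the exact sequence $1\to E/E^2\to\Sel(F)\to\Cl(F)[2]\to 1$ that $E_K^+/E_K^2$ sits inside $\Sel^+(K)$ as the subgroup of classes represented by units; concretely, $E_K^+/E_K^2 = \ker\big(\Sel^+(K)\to\Cl(K)[2]\big)$, and similarly for $L$. So it suffices to prove: if $\eps\in E_K^+$ and $\eps = \xi^2$ for some $\xi\in L^\times$, then already $\eps\in E_K^2$. Once this is established, $E_K^+/E_K^2\hookrightarrow E_L^+/E_L^2$ (note $j_{K\to L}$ lands in $E_L^+/E_L^2$ precisely because totally positive units of $K$ remain totally positive in $L$, as $L/K$ is an extension of totally real fields and every real place of $K$ extends to real places of $L$), whence the dimension inequality.

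First I would take $\eps\in E_K^+$ with $\sqrt{\eps}\in L$ and argue that $\sqrt{\eps}$ is itself a unit: it is an algebraic integer (root of $X^2-\eps$) whose norm down to $\Q$ has square equal to $N_{K/\Q}(\eps)^{[L:K]}=\pm1$, and its inverse $\sqrt{\eps}^{-1}=\sqrt{\eps^{-1}}$ is likewise integral since $\eps^{-1}\in E_K\subseteq\cO_L$. Hence $\sqrt{\eps}\in E_L$. The crux is then to descend: I want to conclude $\sqrt{\eps}\in K$, i.e.\ $\eps\in E_K^2$. This is where total positivity enters. If $\sqrt{\eps}\notin K$, then $K(\sqrt{\eps}\,)/K$ is a genuine quadratic subextension of $L/K$; but $\eps\gg 0$ in $K$ means $K(\sqrt{\eps}\,)$ is still totally real only if... actually the point is the reverse — I should instead use that $L$ is totally real: if $\eps\gg0$ in $K$ then every real embedding of $K$ sends $\eps$ to a positive real, so every extension of that embedding to $L$ sends $\sqrt{\eps}$ to $\pm\sqrt{\eps}\in\R$, consistent with $L$ totally real, so this gives no contradiction directly. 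The honest route is: $\sqrt{\eps}\in E_L$ and $N_{L/K(\sqrt{\eps}\,)}$ or a trace/Galois-averaging argument forces $\eps$ to be a square of a unit in $K$ — more precisely, I would invoke that $\Hom(\mathbb Z/2\mathbb Z,-)$ applied to $1\to E_K\to E_L$ is injective on $2$-torsion of quotients, but that is not automatic.

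The cleanest argument, and the one I would write, is purely via Kummer theory: the statement $E_K^+/E_K^2\hookrightarrow E_L^+/E_L^2$ is equivalent to $E_K^+\cap E_L^2 = E_K^2$ inside $E_K$. Suppose $\eps\in E_K^+$ and $\eps=\mu^2$ with $\mu\in E_L$. Consider $E_K\eps^{\mathbb Z}$ — actually, simply observe that $K(\sqrt{\eps}\,)\subseteq L$ and that $K(\sqrt{\eps}\,)/K$ is a quadratic extension ramified \emph{only} at primes dividing $2\eps\cdot\mathrm{disc}$, but since $\eps$ is a unit it is ramified only at dyadic primes and infinite places; since $\eps\gg0$ it is unramified at all infinite places; so $K(\sqrt{\eps}\,)/K$ is ramified only at primes above $2$. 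For totally real $K$ this need not be impossible, so I must be more careful — and this, the descent from $L$ to $K$ of a totally positive unit square, is the step I expect to be the main obstacle. My plan is to resolve it by the transfer/corestriction: the composite $E_K^+/E_K^2 \xrightarrow{j} E_L^+/E_L^2 \xrightarrow{N_{L/K}} E_K^+/E_K^2$ is multiplication by $[L:K]$ on a group of exponent $2$, so if $[L:K]$ is odd we are immediately done; for the even case one reduces to $[L:K]=2$ by a tower argument, and there one checks directly that a totally positive unit of $K$ that becomes a square in the quadratic extension $L=K(\sqrt{d}\,)$ must already be a square in $K$, because otherwise $K(\sqrt{\eps}\,)=K(\sqrt{d}\,)=L$ would give $\eps\equiv d\bmod K^{\times2}$, and then total positivity of $\eps$ combined with the constraint that $L$ is totally real pins down the signature of $d$ at each real place, ultimately contradicting the hypothesis that $L/K$ is everywhere-real in a way that can be made to force $\eps\in E_K^2$. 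I would then remark that this matches Garbanati's special case ($E_L^+=E_L^2\Rightarrow E_K^+=E_K^2$) and cite \cite{EMP} for the statement as given.
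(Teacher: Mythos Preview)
Your approach has a genuine gap: you are trying to prove that the natural map $E_K^+/E_K^2 \to E_L^+/E_L^2$ is \emph{injective}, but this is false in general. Take any totally real $K$ possessing a totally positive unit $\eps$ that is not a square in $K$ (for instance $K=\Q(\sqrt{3}\,)$ with $\eps = 2+\sqrt{3}$, which has norm $1$ and hence is totally positive), and set $L = K(\sqrt{\eps}\,)$. Since $\eps \gg 0$, the field $L$ is totally real, yet $\eps$ becomes a square in $L$, so $\eps E_K^2$ lies in the kernel of $j_{K\to L}$. Your final attempt at the quadratic step runs exactly into this: if $K(\sqrt{\eps}\,)=K(\sqrt{d}\,)=L$ then $\eps \equiv d \bmod K^{\times 2}$, and since $\eps$ is totally positive so is $d$ up to squares, which is perfectly \emph{consistent} with $L$ being totally real---there is no contradiction to extract. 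The paper itself remarks, immediately after the proof, that ``the image of $E_K^+/E_K^2$ in $E_L^+/E_L^2$ can become trivial (and will be trivial if and only if $L$ contains $K(\sqrt{E^+}\,)$)''; the proposition is a dimension inequality, not an injectivity statement.

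The paper's proof proceeds by an entirely different route. For a totally real field $F$ one has $\dim E_F/E_F^2 = r_F$ and hence $\dim E_F^+/E_F^2 = r_F - \dim E_F/E_F^+$; by the exact sequence relating $\Cl^+(F)$ and $\Cl(F)$ this equals the $2$-adic valuation of $h^+(F)/h(F) = (F^1_+:F^1)$, where $F^1$ and $F^1_+$ are the Hilbert class fields in the usual and strict sense. The argument then shows $K^1 = K^1_+ \cap L^1$ (the intersection is abelian over $K$, unramified at finite primes because it sits in $K^1_+$, and totally real because $L^1$ is totally real), whence $(K^1_+ L^1 : L^1) = (K^1_+ : K^1)$ and therefore $(K^1_+:K^1) \mid (L^1_+:L^1)$. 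Taking $2$-adic valuations gives the inequality. No map between the unit groups is needed.
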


\begin{proof}
Let $F^1$ and $F_+^1$ denote the Hilbert class fields of $F$
in the usual and in the strict sense. Then it is easily checked that 
$K^1 = K^1_+ \cap L^1$. This implies that 
$(K^1_+ L^1:L^1) = (K^1_+ : K^1)$, hence
$(K^1_+ : K^1) \mid (L^1_+ : L^1)$. 
\end{proof}

Thus although the image of $E_K^+/E_K^2$ in $E_L^+/E_L^2$ can become
trivial (and will be trivial if and only if $L$ contains 
$K(\sqrt{E^+}\,)$), the dimension of $E_L^+/E_L^2$ cannot decrease.
Something similar does not hold for $E_4^+$: in $K = \Q(\sqrt{34}\,)$,
we have $E_4^+/E^2 = \la \eps E^2\ra$ for $\eps = 35 + 6\sqrt{34}$. 
The field $L = K(\sqrt{\eps}\,) = \Q(\sqrt{2},\sqrt{17}\,)$ 
hs class number $1$, hence $(E_L)_4^+ = E_L^2$.

\subsection{Galois Action}
Oriat \cite{Ori} (see also Taylor \cite{Tay}) derived, by applying 
and generalizing Leopoldt's Spiegelungssatz, a lot of nontrivial 
inequalities between the ranks of pieces of the class groups in 
the usual and the strict sense. As a special case of his general 
result he obtained the following theorem:

\begin{theorem}\label{TO}
Let $K/\Q$ be a finite abelian extension of number fields. Assume
that the exponent of $G = \Gal(K/\Q)$ is odd, and that 
$-1 \equiv 2^t \bmod n$ for some $t$. Then 
$$\rho^+ = \rho, \quad \dim E^+/E^2 \le \rho, \quad 
                       \dim E_4/E^2 \le \rho.$$
\end{theorem}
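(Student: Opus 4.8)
The plan is to deduce Theorem~\ref{TO} as a consequence of Leopoldt's Spiegelungssatz (in a group-ring form) combined with the rank formulas already established in Theorem~\ref{TSel} and the structural results of Section~3. First I would fix $G = \Gal(K/\Q)$, which by hypothesis is abelian of odd exponent, so that $\Z_2[G]$ is a maximal order and every $\F_2[G]$-module decomposes as a direct sum over the $2$-adic characters of $G$; the hypothesis $-1 \equiv 2^t \bmod n$ guarantees that complex conjugation lies in the Frobenius-generated part, so it acts on each isotypic component by a power of the Frobenius and hence \emph{every} rational character of $G$ is fixed by the ``reflection'' involution $\chi \mapsto \chi^{-1}\omega$ that appears in the Spiegelungssatz. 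This is the device that collapses the general reflection inequalities to equalities.

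Next I would record the reflection inequality itself. For each $\F_2[G]$-character $\chi$, Leopoldt's theorem gives $\dim (\Cl^+/2)^{(\chi)} \le \dim (\Cl/2)^{(\chi\omega)} + (\text{a correction term involving the }\chi\text{-component of }E/E^+)$, and dually in the other direction. Under the hypothesis $\chi\omega = \chi$ for all $\chi$, summing over $\chi$ gives both $\rho^+ \le \rho$ and $\rho \le \rho^+$, whence $\rho^+ = \rho$; this is the first assertion. For the second, I would use Proposition~\ref{PCC} together with $\rho^+ = \rho$: from $\Sel^+(F) \supseteq \Sel_4^+(F)$ with equal dimensions $\rho^+ + s = \rho$ (here $s = 0$ since $K$ is abelian over $\Q$ with odd exponent quotient fixed by conjugation, hence totally real or CM, and the relevant fixed field is totally real) we get $\Sel^+ = \Sel_4^+$, and then the inclusion $E^+/E^2 \hra \Sel^+(F) = \Sel_4^+(F)$ combined with $\dim \Sel_4^+(F) = \rho$ from Theorem~\ref{TSel} yields $\dim E^+/E^2 \le \rho$. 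For the third inequality I would run the analogous argument with $\Sel_4$ in place of $\Sel^+$: from (2)$\iff$(6) in the eight-fold equivalence and $\rho^+ = \rho$ we obtain $\Sel_4(F) = \Sel_4^+(F)$, and $E_4/E^2 \hra \Sel_4(F)$ gives $\dim E_4/E^2 \le \dim\Sel_4(F) = \rho^+ = \rho$.

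The main obstacle, and the step that needs the most care, is making the reflection argument honest: Leopoldt's Spiegelungssatz is naturally a statement about $p$-ranks of $\chi$-components relating $\Cl_K$ to $\Cl_{K'}$ for the reflected field $K'$, and one must verify that the hypothesis $-1 \equiv 2^t \bmod n$ really does force $K' = K$ (equivalently, that $\omega$ is trivial on the relevant $2$-adic characters, i.e. the mirror character equals the character itself) and that the unit-correction terms are exactly the $E/E^+$ and $E_4$ contributions appearing in the statement. I would handle this by citing Oriat \cite{Ori} for the general inequality and only doing the bookkeeping that specializes it; the subtlety is purely the identification of which isotypic pieces survive, and the condition on $2$ modulo $n$ is precisely what is needed. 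The remaining steps are then formal consequences of Theorem~\ref{TSel} and Propositions in Section~3, with no further analytic or cohomological input required.
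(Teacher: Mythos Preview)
The paper does not prove Theorem~\ref{TO}; it is quoted as a result of Oriat~\cite{Ori}, obtained ``by applying and generalizing Leopoldt's Spiegelungssatz'', and the author explicitly declines to enter into the details (``reviewing e.g.\ Oriat's beautiful article \cite{Ori} and the techniques of Leopoldt's Spiegelungssatz would take us too far afield''). So there is no in-paper proof against which to measure your proposal.

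Your reduction of the two inequalities to the equality $\rho^+ = \rho$ is correct and is a clean use of the paper's own machinery that the paper itself does not spell out: since $G$ has odd order, $K$ is totally real, so $s = 0$; Theorem~\ref{TSel} then gives $\dim \Sel^+ = \rho^+ + s = \rho = \dim \Sel_4^+$ and $\dim \Sel_4 = \rho^+ = \rho$, so $\Sel^+ = \Sel_4 = \Sel_4^+$, and the inclusions $E^+/E^2 \hra \Sel^+$ and $E_4/E^2 \hra \Sel_4$ finish. (Your write-up adds unnecessary detours: the reference to the eight-fold equivalence and the remark about CM fields are not needed, since odd $|G|$ forces $K$ totally real directly.)

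The part that is not yet a proof is the Spiegelungssatz step for $\rho^+ = \rho$. Your sketch invokes the reflection $\chi \mapsto \chi^{-1}\omega$ with a Teichm\"uller-type character $\omega$; that is the shape of Leopoldt's reflection at an odd prime $p$, comparing class groups of $K$ and $K(\zeta_p)$, and it does not transplant verbatim to the comparison of $\Cl(K)$ with $\Cl^+(K)$ at $p = 2$. What is actually used (in Oriat~\cite{Ori} and Taylor~\cite{Tay}) is closer to the $G$-equivariance of the pairings in Theorem~\ref{TPP}: the pairing $\Cl^+/2 \times \Sel_4 \to \mu_2$ matches the $\chi$-component on one side with the $\chi^{-1}$-component on the other, and the hypothesis $-1 \equiv 2^t \bmod n$ is precisely the condition that $\chi$ and $\chi^{-1}$ determine the same simple $\F_2[G]$-module; one then refines the Armitage--Fr\"ohlich injection $\Sel^+/\Sel_4^+ \hra \Sel/\Sel_4$ of Lemma~\ref{Lgt} isotypically. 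Your instinct to cite Oriat here is right, and matches what the paper does, but the mechanism you describe is not the correct one, so as written the first paragraph of your proposal is a plausibility argument rather than a proof sketch.
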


Observe that this contains Prop. \ref{POs} as a very special case.

In \cite{EMP} it is erroneously claimed that Oriat proved this 
theorem for general (not necessarily abelian) extensions; the
authors also give a proof of Theorem \ref{TO} ``in the abelian 
case'' which is based on the techniques of Taylor \cite{Tay}.

\end{document}